\definecolor{darkblue}{rgb}{0.0, 0.0, 0.55}
\definecolor{bordeaux}{rgb}{0.34, 0.01, 0.1}
\def\Z{{\mathbb{Z}}}
\def\R{{\mathbb{R}}}
\def\N{{\mathbb{N}}}
\def\x{{\mathbf{x}}}
\def\y{{\mathbf{y}}}
\def\bu{{\mathbf{u}}}
\def\be{{\mathbf{e}}}
\def\bo{{\boldsymbol{\omega}}}
\def\a{{\boldsymbol{\alpha}}}
\def\b{{\boldsymbol{\beta}}}
\def\g{{\boldsymbol{\gamma}}}
\def\bv{{\boldsymbol{v}}}
\newcommand{\br}{\mathbf{r}}
\newcommand{\bs}{\mathbf{s}}
\def\A{{\mathscr{A}}}
\def\B{{\mathscr{B}}}
\def\S{{\mathscr{S}}}
\def\up{{\boldsymbol{\upsilon}}}
\def\supp{\hbox{\rm{supp}}}
\def\Tr{\hbox{\rm{Tr}}}
\def\int{\hbox{\rm{int}}}
\def\New{\hbox{\rm{New}}}
\def\Conv{\hbox{\rm{conv}}}
\def\DSOS{\hbox{\rm{DSOS}}}
\def\SDSOS{\hbox{\rm{SDSOS}}}
\newcommand{\newtssos}[1]{{{\color{black}#1}}}
\newcommand{\vge}{\mathbin{\rotatebox[origin=c]{90}{$\ge$}}}
\crefname{hypothesis}{Hypothesis}{Hypotheses}
\title{TSSOS: A Moment-SOS hierarchy that exploits term sparsity\thanks{Submitted to the editors DATE.
}}
\author{Jie Wang\thanks{Laboratoire d'Analyse et d'Architecture des Syst\`emes (LAAS), Toulouse, France 
  (\email{jwang@laas.fr})}
\and Victor Magron\thanks{Laboratoire d'Analyse et d'Architecture des Syst\`emes (LAAS), Institute of Mathematics, 
University of Toulouse, France 
  (\email{vmagron@laas.fr})}
\and Jean-Bernard Lasserre\thanks{Laboratoire d'Analyse et d’Architecture des Syst\`emes (LAAS), Institute of Mathematics, 
University of Toulouse, France
(\email{lasserre@laas.fr})}}
\begin{document}

\maketitle

\begin{abstract}
This paper is concerned with polynomial optimization problems.
We show how to exploit term (or monomial) sparsity of the input polynomials to obtain
a new converging hierarchy of semidefinite programming relaxations. The novelty (and distinguishing feature) of
such relaxations is to involve block-diagonal matrices obtained
in an iterative procedure performing completion of the connected components of certain adjacency graphs. The graphs are related to the terms arising in the original data and
{\em not} to the links
between variables. Our theoretical framework is then applied to compute lower bounds for polynomial optimization problems either randomly generated or coming from the networked system literature.
\end{abstract}

\begin{keywords}
  polynomial optimization, moment relaxation, sum of squares, term sparsity, moment-SOS hierarchy, semidefinite programming
\end{keywords}

\begin{AMS}
  Primary, 14P10,90C25; Secondary, 12D15,12Y05
\end{AMS}

\section{Introduction}
In this paper we provide a new method to handle a certain class of {\em sparse} polynomial optimization problems. Roughly speaking, for problems in this class the terms (monomials) appearing in the involved polynomials satisfy a certain ``sparsity pattern" which is represented by block-diagonal binary matrices. This sparsity
pattern concerned with the structure of {\em monomials} involved
in the problem, is different from the correlative sparsity pattern already studied in \cite{waki} and related to the links between {\em variables}.

\subsection*{Background} The problem of minimizing a polynomial over a set defined by a finite conjunction of polynomial inequalities (also known as a \textit{basic semialgebraic} set), is known to be NP-hard~\cite{Lau09b}.
The \textit{moment-sum of squares} (moment-SOS) hierarchy by Lasserre~\cite{las1} is a nowadays established methodology allowing one to handle this problem.
Optimizing a polynomial can be reformulated either with a primal infinite-dimensional linear program (LP) over probability measures or with its dual LP over nonnegative polynomials.
In a nutshell, the moment-SOS hierarchy is based on the fact that one can consider a sequence of finite-dimensional primal-dual relaxations for the two above-mentioned LPs.
At each step of the hierarchy, one only needs to solve a single semidefinite program (SDP).
Under mild assumptions (slightly stronger than compactness), the related sequence of optimal values converges to the optimal value of the initial problem.
One well-known limitation of this methodology is that the size of the matrices involved in the primal-dual SDP at the $d$-th step of the hierarchy is proportional to $\binom{n+d}{n}$, where $n$ is the number of variables of the initial problem.

There are several existing ways to overcome these scalability limitations.
To compute the SOS decomposition of a given nonnegative polynomial, one can systematically reduce the size of the corresponding SDP matrix by removing the terms (monomials) which cannot appear in the support of the decomposition~\cite{re}.
One can also exploit (i) the sparsity pattern satisfied by the variables of the initial problem~\cite{Las06,waki} (see also the related SparsePOP solver~\cite{WakiKKMS08}) as well as (ii) the symmetries~\cite{Riener13} of the problem.
In particular, sparsity has been successively exploited for specific applications, e.g. for solving optimal power flow problems~\cite{Josz16}, roundoff error bound analysis \cite{toms17,toms18}, or more recently to approximate the volume of sparse semialgebraic sets \cite{Tacchi19}.
The polynomials involved in these applications have a specific \textit{correlative sparsity pattern}.
Sparse polynomial optimization is based on re-indexing the SDP matrices involved in the moment-SOS hierarchy, by considering subsets $I_1,\dots,I_p \subseteq \{1,\dots,n\}$ of the input variables.
One then obtains a sparse variant of the moment-SOS hierarchy with quasi block-diagonal SDP matrices, each block having a size related to the cardinality of these subsets.
Hence if the cardinalities are small with respect to $n$, then the resulting SDP relaxations yield significant (sometimes drastic)
computational savings. Under mild assumptions, global convergence of this sparse version of the moment-SOS hierarchy is guaranteed if the so-called \textit{running intersection property} (RIP) holds.
Recently, this methodology has been extended in~\cite{ncsparse} to sparse problems with non-commuting variables (for instance matrices).
Other SOS-based representations include the bounded degree sum of squares~\cite{TohLasserre} with its sparse variant \cite{we}.
These two latter hierarchies come with same convergence guarantees as the standard ones (under the same sparsity pattern assumption).
They involve SDP matrices of smaller size but come with potentially larger sets of linear constraints which may
sometimes result in ill-conditioned relaxations.

Other than exploiting sparsity from the perspective of variables, one can also exploit sparsity from the perspective of terms, such as sign-symmetries \cite{lo1} and minimal coordinate projections \cite{pe1} in the unconstrained case. More recently, \textit{cross sparsity patterns}, a new attempt in this direction introduced in \cite{wang}, apply to a wider class of polynomials. By exploiting cross sparsity patterns, a monomial basis used for constructing SOS decompositions is partitioned into blocks. If each block has a small size with respect to the size of the original monomial basis, then the corresponding SDP matrix is block-diagonal with small blocks and this might significantly improve the efficiency and the scalability.

{\em The present paper can be viewed as a comprehensive extension of the idea in \cite{wang}
to the constrained case and in a more general perspective.}


All the above-mentioned hierarchies require to solve a sequence of SDP relaxations. However in other convex programming frameworks, there exist alternative classes of positivity certificates also based on term sparsity.
This includes sums of \textit{nonnegative circuit polynomials} (SONC) and sums of \textit{arithmetic-geometric-exponential-means polynomials} (SAGE).
A circuit polynomial is a polynomial with support containing only monomial squares, except at most one term, whose exponent is a strict convex combination of the other exponents.
An AGE polynomial is a composition of weighted sums of exponentials with linear functionals of the variables, which is nonnegative and contains also at most one negative coefficient.
Existing frameworks \cite{Chandrasekaran:Shah:SAGE,Ghasemi:Marshall:GP,Iliman:deWolff:Circuits} allow one to compute sums of nonnegative circuits and sums of AGEs by relying on geometric programming and signomial programming, respectively.
In~\cite{ahmadi2014dsos}, the authors introduce alternative decompositions of nonnegative polynomials as \textit{diagonal sum of squares} (DSOS) and \textit{scaled diagonal sum of squares} (SDSOS).
Such decompositions can be computed via linear programming and second order cone programming, respectively, a potential advantage with respect to standard SOS-based decompositions.
For these frameworks based on SAGE/SONC/(DSOS)SDSOS decompositions, one can also handle constrained problems and derive a corresponding converging hierarchy of lower bounds.
However the underlying relaxations share the same drawback, namely
their implementation and the computation of resulting lower bounds are not easy in practice. Very recently, a combination of correlative sparsity and SDSOS has been proposed in \cite{mi}. This method does not provide a guarantee of convergence and, in its current state, is only applicable to the case of unconstrained polynomial optimization problems.

\subsection*{Contributions}
We provide a new sparse moment-SOS hierarchy based on term sparsity rather than correlative sparsity.
This is in deep contrast with the sparse variant of the moment-SOS hierarchy developed in~\cite{Las06,waki}.

\newtssos{$\bullet$ In Section~\ref{sec3}, we describe an iterative procedure to exploit the term sparsity in polynomials that describe the problem on hand. Each iteration consists of two steps, a {\em support-extension} operation followed by a {\em block-closure} operation on certain binary matrices. This iterative procedure is then applied to unconstrained polynomial optimization in Section~\ref{sec4} and constrained polynomial optimization in Section~\ref{sec5}. In both cases the iterative procedure leads to a converging moment-SOS hierarchy (called TSSOS hierarchy) of primal-dual relaxations involving {\em block-diagonal} SDP matrices. If the sizes of blocks are small with respect to the original SDP matrices, then the resulting SDP relaxations yield a significant computational saving.

$\bullet$ The TSSOS hierarchy (in the constrained case) depends on two parameters: the relaxation order $\hat{d}$ and the sparse order $k$ (corresponding to each iterative step), and hence allows one more level of flexibility by playing with the two parameters $\hat{d}$ and $k$. The optimal values of the TSSOS hierarchy, at fixed relaxation order $\hat{d}$, yield a non-decreasing sequence  converging to the optimal value of the dense moment-SOS hierarchy at the same relaxation order in a few steps (typically two or three). 
In the unconstrained case we prove that even at first iterative step ($k=1$), the optimal value of the corresponding SDP relaxation is already no worth than the one obtained with the SDSOS-based decompositions~\cite{ahmadi2014dsos}.

$\bullet$ We prove in Section~\ref{sec6} that the block-structure of the TSSOS hierarchy at each relaxation order converges to the block-structure determined by the sign-symmetries related to the support of the input data.
This also enables us to provide a new sparse variant of Putinar's Positivstellensatz~\cite{pu} for positive polynomials over basic compact semialgebraic sets. In this representation, the supports of all SOS polynomials are reduced according to the sign-symmetries.

$\bullet$ In Section~\ref{sec:benchs}, we compare the efficiency and scalability of the TSSOS hierarchy with existing frameworks on randomly generated examples as well as on problems arising from the networked system literature. The numerical results demonstrate that TSSOS has a significantly better performance in terms of   efficiency and scalability.
In addition, and although it is not guaranteed in theory, we observe in our numerical results that
the optimal value obtained at the first iterative step ($k=1$) of the TSSOS hierarchy is always the same as the one obtained
from the dense moment-SOS hierarchy on all tested examples, a very encouraging sign of efficiency. At last but not least, we emphasize that in
all numerical examples (except the Broyden banded function from \cite{waki}), the usual correlative sparsity pattern is dense or almost dense and so yields no or little computational savings (or cannot even be implemented).}

\newtssos{
As mentioned in Remark \ref{sec3-rm} and done in the companion paper \cite{chordaltssos}, one can replace block-closure by chordal-extension to exploit term sparsity, in order to obtain an even more sparse variant of the moment-SOS hierarchy: this is the so-called Chordal-TSSOS moment-SOS hierarchy.
In the present paper we treat general
polynomial optimization problems (POPs).
However, as often the case, some correlative sparsity 
is present in the input data (description) of large-scale POPs. Therefore a natural idea is to combine correlative sparsity with our
current TSSOS framework of term sparsity,
for solving large-scale POPs. Such an extension (called CS-TSSOS) is considered in our recent work \cite{cstssos} and is non-trivial as it requires extra care when manipulating monomials that involve variables of different cliques that appear in the correlative sparsity pattern. As a result, CS-TSSOS can handle large-scale POPs (e.g., instances of the celebrated Max-Cut and optimal power flow problems) with up to several thousands of variables.
}
%

%
\section{Notation and Preliminaries}
\subsection{Notation and SOS polynomials}
Let $\x=(x_1,\ldots,x_n)$ be a tuple of variables and $\R[\x]=\R[x_1,\ldots,x_n]$ be the ring of real $n$-variate polynomials. For a subset $\A\subseteq\N^n$, we denote by $\Conv(\A)$ the convex hull of $\A$. A polynomial $f\in\R[\x]$ can be written as $f(\x)=\sum_{\a\in\A}f_{\a}\x^{\a}$ with $f_{\a}\in\R, \x^{\a}=x_1^{\alpha_1}\cdots x_n^{\alpha_n}$. The support of $f$ is defined by $\supp(f)=\{\a\in\A\mid f_{\a}\ne0\}$, and the Newton polytope of $f$ is defined as the convex hull of $\supp(f)$, i.e., $\New(f)=\Conv(\{\a:\a\in\supp(f)\})$. We use $|\cdot|$ to denote the cardinality of a set. For $\A_1,\A_2\subseteq\N^n$, let $\A_1+\A_2:=\{\a_1+\a_2\mid\a_1\in\A_1,\a_2\in\A_2\}$.

For a nonempty finite set $\A\subseteq\N^n$, let $\mathscr{P}(\A)$ be the set of polynomials in $\R[\x]$ whose supports are contained in $\A$, i.e.\,$\mathscr{P}(\A)=\{f\in\R[\x]\mid\supp(f)\subseteq\A\}$ and let $\x^{\A}$ be the $|\A|$-dimensional column vector consisting of elements $\x^{\a},\a\in\A$ (fix any ordering on $\N^n$). For a positive integer $r$, the set of $r\times r$ symmetric matrices is denoted by $\mathbb{S}^r$ and the set of $r\times r$ positive semidefinite (PSD) matrices is denoted by $\mathbb{S}_+^r$.

Given a polynomial $f(\x)\in\R[\x]$, if there exist polynomials $f_1(\x),\ldots,f_t(\x)$ such that
\begin{equation}\label{sec2-eq1}
f(\x)=\sum_{i=1}^tf_i(\x)^2,
\end{equation}
then we say that $f(\x)$ is a \textit{sum of squares} (SOS) polynomial. Clearly, the existence of an SOS decomposition of a given polynomial provides a certificate for its global nonnegativity. For $d\in\N$, let $\N^n_d:=\{\a=(\alpha_i)\in\N^n\mid\sum_{i=1}^n\alpha_i\le d\}$ and assume that $f\in\mathscr{P}(\N^n_{2d})$. If we choose the {\em standard monomial basis} $\x^{\N^n_{d}}$, then the SOS condition \eqref{sec2-eq1} is equivalent to the existence of a PSD matrix $Q$ (which is called a \textit{Gram matrix} \cite{re2}) such that
\begin{equation}\label{sec2-eq2}
f(\x)=(\x^{\N^n_{d}})^TQ\x^{\N^n_{d}},
\end{equation}
which is formulized as a semidefinite program (SDP).

We say that a polynomial $f\in\mathscr{P}(\N^n_{2d})$ is {\em sparse} if the number of elements in its support $\A=\supp(f)$ is much smaller than the number of elements in $\N^n_{2d}$ that forms a support of fully dense polynomials in $\mathscr{P}(\N^n_{2d})$. When $f(\x)$ is a sparse polynomial in $\mathscr{P}(\N^n_{2d})$, the size of the corresponding SDP \eqref{sec2-eq2} can be reduced by computing a smaller monomial basis. In fact, the set $\N^n_{d}$ in (\ref{sec2-eq2}) can be replaced by the integer points in half of the Newton polytope of $f$, i.e.\,by
\begin{equation}\label{sec2-eq3}
\B=\frac{1}{2}\cdot\New(f)\cap\N^n\subseteq\N^n_{d}.
\end{equation}
See \cite{re} for a proof.
We refer to this as the \textit{Newton polytope method}.
There are also other methods to reduce the size of $\B$ further \cite{ko,pe}.
Throughout this paper, we will use a monomial basis, which is either the monomial basis given by the Newton polytope method in the unconstrained case or the standard monomial basis in the constrained case. For convenience, we abuse notation in the sequel and denote the monomial basis $\x^{\B}$ by the exponents $\B$.

\subsection{Moment matrices}
With $\y=(y_{\a})_{\a\in\N^n}$ being a sequence indexed by the standard monomial basis $\N^n$ of $\R[\x]$, let $L_{\y}:\R[\x]\rightarrow\R$ be the linear functional
\begin{equation*}
f=\sum_{\a}f_{\a}\x^{\a}\mapsto L_{\y}(f)=\sum_{\a}f_{\a}y_{\a}.
\end{equation*}
For a monomial basis $\B$, the {\em moment} matrix $M_{\B}(\y)$  associated with $\B$ and $\y$ is the matrix with rows and columns indexed by $\B$ such that
\begin{equation*}
M_{\B}(\y)_{\b\g}:=L_{\y}(\x^{\b}\x^{\g})=y_{\b+\g}, \quad\forall\b,\g\in\B.
\end{equation*}
If $\B$ is the standard monomial basis $\N^n_{d}$, we also denote $M_{\B}(\y)$ by $M_{d}(\y)$.

Suppose $g=\sum_{\a}g_{\a}\x^{\a}\in\R[\x]$ and let $\y=(y_{\a})_{\a\in\N^n}$ be given. For a positive integer $d$, the {\em localizing} matrix $M_{d}(g\y)$ associated with $g$ and $\y$ is the matrix with rows and columns indexed by $\N^n_{d}$ such that
\begin{equation*}
M_{d}(g\,\y)_{\b\g}:=L_{\y}(g\,\x^{\b}\x^{\g})=\sum_{\a}g_{\a}y_{\a+\b+\g}, \quad\forall\b,\g\in\N^n_{d}.
\end{equation*}

\section{Exploiting term sparsity in SOS decompositions}\label{sec3}
For a positive integer $r$, let $[r] := \{1,\ldots,r\}$. For matrices $A,B\in\mathbb{S}^r$, let $A\circ B\in\mathbb{S}^r$ denote the Hadamard, or entrywise, product of $A$ and $B$, defined by the equation $[A\circ B]_{ij} = A_{ij}B_{ij}$ and let $\langle A, B\rangle\in\R$ be the trace inner-product, defined by $\langle A, B\rangle=\Tr(A^TB)$. Let $\Z_2^{r\times r}$ ($\Z_2:=\{0,1\}$) be the set of $r\times r$ binary matrices. The support of a binary matrix $B\in\mathbb{S}^r\cap\Z_2^{r\times r}$ is the set of locations of nonzero entries, i.e.,
$$\supp(B):=\{(i,j)\in[r]\times[r]\mid B_{ij}=1\}.$$

For a binary matrix $B\in\mathbb{S}^r\cap\Z_2^{r\times r}$, we define the set of PSD matrices with sparsity pattern represented by $B$ as
\begin{equation*}
\mathbb{S}_+^r(B):=\{Q\in \mathbb{S}_+^r\mid B\circ Q=Q\}.
\end{equation*}

Let $f(\x)=\sum_{\a\in\A}f_{\a}\x^{\a}$ with $\supp(f)=\A$ and $\B$ be a monomial basis with $r=|\B|$. For any $\a\in\B+\B$, associate it with a binary matrix $A_{\a}\in\mathbb{S}^r\cap\Z_2^{r\times r}$ such that $[A_{\a}]_{\b\g}=1$ iff $\b+\g=\a$ for all $\b,\g\in\B$. Then $f(\x)$ is an SOS polynomial iff there exists $Q\in\mathbb{S}_+^r$ such that the following coefficient matching condition holds:
\begin{equation}\label{sec3-eq4}
\langle A_{\a}, Q\rangle=f_{\a} \textrm{ for all }\a\in\B+\B,
\end{equation}
where we set $f_{\a}=0$ if $\a\notin\A$. 
\newtssos{
For later use, we also define $A_{\S}:=\sum_{\a\in\S}A_{\a}$ for any subset $\S\subseteq\B+\B$.
}
For convenience, we define a block-closure operation on binary matrices as follows.
\begin{definition}
\newtssos{A relation $R\subseteq[r]\times[r]$ is called {\em transitive} if 
$(i,j),(j,k)\in R$ implies $(i,k)\in R$. 
The {\em transitive closure} of $R$, denoted by $\overline{R}$, is the smallest relation that contains $R$ and is transitive.}
For a binary matrix $B\in\mathbb{S}^r\cap\Z_2^{r\times r}$, let $R\subseteq[r]\times[r]$ be the adjacency relation of $B$, i.e.,\,$(i,j)\in R$ iff $B_{ij}=1$. Then define the {\em block-closure} $\overline{B}\in\mathbb{S}^r\cap\Z_2^{r\times r}$ as
\begin{equation*}
\overline{B}_{ij} :=
\begin{cases}
1,\quad&(i,j)\in\overline{R}, \\
0,\quad&\textrm{otherwise}.
\end{cases}
\end{equation*}
\end{definition}

For a binary matrix $B\in\mathbb{S}^r\cap\Z_2^{r\times r}$, the evaluation of $\overline{B}$ has a graphical description (assume $B_{ii}=1$ for all $i$). Suppose that $G$ is the adjacency graph of $B$. Then $\overline{B}$ is the adjacency
matrix of the graph obtained by completing the connected components of $G$ to complete subgraphs. 
\newtssos{Hence the evaluation of block-closure boils down to the computation of connected components of a graph, which can be done in linear time (in terms of the numbers of the vertices and edges of the graph)}. Note also that $\overline{B}$ is block-diagonal up to permutation, where each block corresponds to a connected component of $G$. Figure 1 is a simple example where $\overline{B}$ has two blocks of size $3$ and $1$ corresponding to the connected components of $G$: $\{1,3,4\}$ and $\{2\}$, respectively.
\begin{figure}[htbp]
\begin{center}
\begin{minipage}{0.6\linewidth}
\begin{displaymath}
B=\begin{bmatrix}
1&0&1&0\\
0&1&0&0\\
1&0&1&1\\
0&0&1&1\\
\end{bmatrix}
\qquad
\overline{B}=\begin{bmatrix}
1&0&1&1\\
0&1&0&0\\
1&0&1&1\\
1&0&1&1\\
\end{bmatrix}
\end{displaymath}
\end{minipage}
\begin{minipage}{0.35\linewidth}
\begin{tikzpicture}[every node/.style={circle, draw=blue!50, thick}]
\node (n1) at (0,1.5) {$1$};
\node (n2) at (0,0) {$3$};
\node (n3) at (1.5,0) {$4$};
\node (n4) at (2.5,0) {$2$};
\coordinate[label=above:$G:$] (G) at (-0.6,0.25);
\draw[thick] (n1)--(n2);
\draw[thick] (n2)--(n3);
\draw[dashed] (n1)--(n3);
\end{tikzpicture}
\end{minipage}
\end{center}
\caption{Block-closure and connected components}
\end{figure}
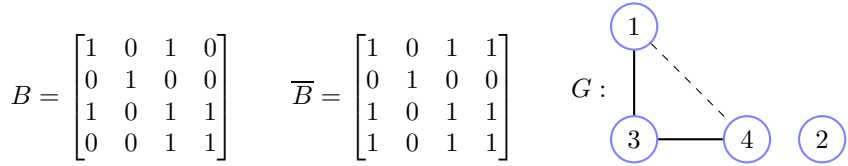

\begin{remark}\label{sec3-rm}
The block-closure operation $\overline{B}$ used in this paper can be actually replaced by a chordal-extension operation on adjacency graphs. Then take maximal cliques rather than connected components. \newtssos{See \cite{wang,chordaltssos} for more details.} 
We use the block-closure in this paper since it is very simple to determine.
\end{remark}

Let $f(\x)\in\R[\x]$ with $\supp(f)=\A$ and let $\B$ be a monomial basis with $r=|\B|$. Let $\S^{(0)}=\A\cup(2\B)$ where $2\B=\{2\b\mid\b\in\B\}$. For $k\ge1$, we recursively define binary matrices $B_{\A}^{(k)}\in\mathbb{S}^r\cap\Z_2^{r\times r}$ indexed by $\B$ via two successive steps:\\
\newtssos{
1) {\bf Support-extension}: define a binary matrix
$C_{\A}^{(k)}=A_{\S^{(k-1)}}$, i.e.,
\begin{equation*}
[C_{\A}^{(k)}]_{\b\g} :=
\begin{cases}
1,\quad&\textrm{if }\b+\g\in\S^{(k-1)}, \\
0,\quad&\textrm{otherwise}.
\end{cases}
\end{equation*}
2) {\bf Block-closure}: let $B_{\A}^{(k)}=\overline{C_{\A}^{(k)}}$
and $\S^{(k)}=\bigcup_{[B_{\A}^{(k)}]_{\b\g}=1}\{\b+\g\}$.\\
}
By construction, it is easy to see that $\supp(B_{\A}^{(k)})\subseteq\supp(B_{\A}^{(k+1)})$ for all $k\ge1$. Hence the sequence of binary matrices $(B_{\A}^{(k)})_{k\ge1}$ stabilizes after a finite number of steps. We denote the stabilized matrix by $B_{\A}^{(*)}$.

Let us denote the set of SOS polynomials supported on $\A$ by
\begin{equation*}
\Sigma(\A):=\{f\in\mathscr{P}(\A)\mid\exists Q\in \mathbb{S}_+^r\textrm{ s.t. }f=(\x^{\B})^TQ\x^{\B}\},
\end{equation*}
and for $k\ge1$, let $\Sigma_k(\A)$ be the subset of $\Sigma(\A)$ whose member admits a Gram matrix with sparsity pattern represented by $B_{\A}^{(k)}$, i.e.,
\begin{equation}\label{sec3-eq1}
\Sigma_k(\A):=\{f\in\mathscr{P}(\A)\mid\exists Q\in \mathbb{S}_+^r(B_{\A}^{(k)})\textrm{ s.t. }f=(\x^{\B})^TQ\x^{\B}\}.
\end{equation}
In addition, let
\begin{equation}\label{sec3-eq2}
\Sigma_*(\A):=\{f\in\mathscr{P}(\A)\mid\exists Q\in \mathbb{S}_+^r(B_{\A}^{(*)})\textrm{ s.t. }f=(\x^{\B})^TQ\x^{\B}\}.
\end{equation}
By construction, we have the following inclusions:
$$\Sigma_1(\A)\subseteq\Sigma_2(\A)\subseteq\cdots\subseteq\Sigma_*(\A)\subseteq\Sigma(\A).$$

\begin{theorem}\label{sec3-thm1}
For a finite set $\A\subseteq\N^n$, one has $\Sigma_*(\A)=\Sigma(\A)$.
\end{theorem}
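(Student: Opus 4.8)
The plan is to show $\Sigma(\A) \subseteq \Sigma_*(\A)$ (the reverse inclusion being immediate from the chain displayed just above the theorem). Take $f \in \Sigma(\A)$, so there exists $Q \in \mathbb{S}_+^r$ with $f = (\x^{\B})^T Q \x^{\B}$. Expanding, $f = \sum_{\b,\g\in\B} Q_{\b\g} \x^{\b+\g}$, and since $\supp(f) = \A$, the coefficient matching condition $\langle A_{\a}, Q\rangle = f_{\a}$ holds for all $\a \in \B+\B$ with $f_\a = 0$ whenever $\a \notin \A$. The goal is to argue that this particular $Q$ (unchanged!) already has sparsity pattern contained in $B_{\A}^{(*)}$, i.e.\ $B_{\A}^{(*)} \circ Q = Q$, which would place $Q \in \mathbb{S}_+^r(B_{\A}^{(*)})$ and hence $f \in \Sigma_*(\A)$.

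The key step is to show, by induction on $k$, that $\supp(Q) \subseteq \supp(B_{\A}^{(k)})$ for every $k \ge 1$, where $\supp(Q) := \{(\b,\g) : Q_{\b\g} \ne 0\}$. For the base case and the inductive step it suffices to prove: if $\supp(Q) \subseteq \S^{(k-1)}$ in the sense that $Q_{\b\g} \ne 0 \implies \b+\g \in \S^{(k-1)}$, then the same holds with $\S^{(k)}$ in place of $\S^{(k-1)}$. The support-extension step gives $\supp(Q) \subseteq \supp(C_{\A}^{(k)})$ directly from the hypothesis and the definition of $C_{\A}^{(k)} = A_{\S^{(k-1)}}$. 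The block-closure step only enlarges the pattern ($\supp(C_{\A}^{(k)}) \subseteq \supp(B_{\A}^{(k)})$), so $\supp(Q) \subseteq \supp(B_{\A}^{(k)})$, and by the definition $\S^{(k)} = \bigcup_{[B_{\A}^{(k)}]_{\b\g}=1}\{\b+\g\}$ we recover the hypothesis at level $k$. For the base case $k=1$: the hypothesis "$\supp(Q) \subseteq \S^{(0)}$" reads $Q_{\b\g}\ne 0 \implies \b+\g \in \A \cup (2\B)$; this is exactly what the coefficient matching condition forces, since if $\b+\g = \a \notin \A$ and $\a \notin 2\B$, then $f_\a = 0 = \langle A_\a, Q\rangle = \sum_{\b'+\g'=\a} Q_{\b'\g'}$, but the diagonal terms $Q_{\b'\b'}$ ($\b'+\b' = \a$) are absent (as $\a\notin 2\B$) so all summands are off-diagonal — wait, this does not immediately give each summand zero.

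Here is where the only real subtlety lies, and I would handle it as follows. The matching condition does \emph{not} by itself kill individual off-diagonal entries on an antidiagonal $\{(\b,\g) : \b+\g = \a\}$; cancellation is possible. The standard fix (used in \cite{re} and \cite{wang}) is to first \emph{reduce} $\B$ to the Newton polytope basis or otherwise invoke the result that for $Q \succeq 0$, if a diagonal entry's exponent $2\b$ lies outside $\New(f)$ then the whole row and column of $Q$ at $\b$ vanish. More directly: we may assume $\B$ is chosen minimally (this is built into the definition of "monomial basis" in the paper), so every $\b \in \B$ satisfies $2\b \in \New(f) \subseteq \Conv(\A)$. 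Then for any $(\b,\g) \in \supp(Q)$, positive semidefiniteness of $Q$ gives $Q_{\b\b}, Q_{\g\g} > 0$, so $2\b, 2\g \in \S^{(0)}$. Now run the support-extension/block-closure inductively: after the first block-closure, $\b$ and $\g$ are already in the same connected component (since $Q_{\b\g}\ne 0$ need not have put them there, but $2\b, 2\g \in \S^{(0)}$ puts the diagonal loops in $C_\A^{(1)}$...) — the cleanest route is to observe that once we know $\supp(Q) \subseteq \supp(B_\A^{(k)})$ for all $k$, we are done, and the induction is seeded by $\supp(Q) \subseteq \supp(B_\A^{(1)})$; the latter follows because $B_\A^{(1)}$ is a block-closure, so it is block-diagonal, and each $(\b,\g)\in\supp(Q)$ connects two diagonal loops $(\b,\b),(\g,\g)$ which lie in $C_\A^{(1)}$, forcing $(\b,\g)$ into the same block of $\overline{C_\A^{(1)}} = B_\A^{(1)}$. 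Finally, since $(B_\A^{(k)})_k$ stabilizes to $B_\A^{(*)}$ after finitely many steps, $\supp(Q) \subseteq \supp(B_\A^{(*)})$, giving $Q \in \mathbb{S}_+^r(B_\A^{(*)})$ and thus $f \in \Sigma_*(\A)$.

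The main obstacle, as flagged, is justifying that each $(\b,\g) \in \supp(Q)$ is captured — not via the (false) claim that matching kills antidiagonals entrywise, but via the diagonal-dominance-of-support argument: $Q \succeq 0$ forces $Q_{\b\b}, Q_{\g\g}$ to be nonzero whenever $Q_{\b\g} \ne 0$, and minimality of the basis $\B$ (equivalently, the Newton polytope reduction) guarantees those diagonal exponents $2\b, 2\g$ land in $\S^{(0)}$; the block-closure then merges $\b,\g$ into one block at the very first iteration. Once this seeding is established, the induction propagating $\supp(Q) \subseteq \supp(B_\A^{(k)})$ upward is purely formal, and finite stabilization closes the argument.
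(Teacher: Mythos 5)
Your overall strategy---showing that the \emph{given} Gram matrix $Q$, unchanged, already satisfies $B_{\A}^{(*)}\circ Q=Q$---cannot work, because the claim is false. A generic Gram matrix of an $f\in\Sigma(\A)$ has nonzero entries outside $\supp(B_{\A}^{(*)})$; only the antidiagonal \emph{sums} $\langle A_{\a},Q\rangle$ over exponents $\a\notin\S^{(*)}$ are forced to vanish, not the individual entries. Concretely, take $f=1+x^4+y^4+x^2y^2$ with the standard basis $\{1,x,y,x^2,xy,y^2\}$: here $B_{\A}^{(*)}$ separates $x$ from $y$ (their parities differ, cf.\ Theorem \ref{sec6-thm2}), yet $f$ admits a PSD Gram matrix with $Q_{x,y}=-t\neq 0$ compensated by $Q_{1,xy}=t$ (e.g.\ $t=s=0.1$ with $Q_{x,x}=Q_{y,y}=2s$, $Q_{1,x^2}=Q_{1,y^2}=-s$, $Q_{xy,xy}=Q_{x^2,x^2}=Q_{y^2,y^2}=1$). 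Moreover, the specific mechanism you invoke to seed the induction is not valid: the fact that $2\b,2\g\in\S^{(0)}$ only puts the self-loops $(\b,\b)$ and $(\g,\g)$ into the graph of $C_{\A}^{(1)}$, and block-closure merges two vertices only when a \emph{path} joins them---two vertices each carrying a self-loop need not lie in the same connected component. If your argument were correct it would apply to every pair $\b,\g\in\B$ (since $2\B\subseteq\S^{(0)}$ by definition), forcing $B_{\A}^{(1)}$ to be the all-ones matrix, which contradicts the paper's own worked examples.

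The repair is not to keep $Q$ but to project it: the paper sets $\tilde{Q}=B_{\A}^{(*)}\circ Q$ and proves two things. First, $\tilde{Q}$ is still a Gram matrix of $f$: because $B_{\A}^{(*)}$ is stabilized under support-extension one has $B_{\A}^{(*)}=A_{\S^{(*)}}$, so the entries deleted in passing from $Q$ to $\tilde{Q}$ form \emph{complete} antidiagonals $\{(\b,\g):\b+\g=\a\}$ with $\a\in(\B+\B)\setminus\S^{(*)}$; since $\A\subseteq\S^{(*)}$, each such antidiagonal contributes $f_{\a}=0$ to the polynomial, so deleting it changes nothing. (This is exactly how one exploits cancellation rather than fighting it.) Second, $\tilde{Q}$ remains PSD because it is block-diagonal up to permutation with each block a principal submatrix of $Q$---this is where the block-closure step earns its keep. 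Both ingredients---stabilization guaranteeing that whole antidiagonals are removed, and block-diagonality guaranteeing positive semidefiniteness of the projected matrix---are absent from your argument, and without them the proof does not go through.
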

\begin{proof}
We only need to prove the inclusion $\Sigma(\A)\subseteq\Sigma_*(\A)$.
Suppose $\B$ is a monomial basis. For any $f\in\Sigma(\A)$, let $Q\in\mathbb{S}_+^r$ be a Gram matrix of $f$ and we construct a matrix $\tilde{Q}\in\mathbb{S}_+^r$ by $\tilde{Q}=B_{\A}^{(*)}\circ Q$. \newtssos{We next show that $f=(\x^{\B})^T\tilde{Q}\x^{\B}$. Let $\S^{(*)}=\cup_{[B_{\A}^{(*)}]_{\b\g}=1}\{\b+\g\}$. By construction, $B_{\A}^{(*)}$ is stabilized under the support-extension operation and hence we have $B_{\A}^{(*)}=A_{\S^{(*)}}$. Thus $(\x^{\B})^TQ\x^{\B}-(\x^{\B})^T\tilde{Q}\x^{\B}=(\x^{\B})^T(Q-\tilde{Q})\x^{\B}=(\x^{\B})^T(A_{(\B+\B)}\circ Q-A_{\S^{(*)}}\circ Q)\x^{\B}=(\x^{\B})^T(A_{(\B+\B)\backslash\S^{(*)}}\circ Q)\x^{\B}$. Again by construction, one has $\A\subseteq\S^{(*)}$. It follows that $(\x^{\B})^T(A_{(\B+\B)\backslash\S^{(*)}}\circ Q)\x^{\B}=0$ since $(\x^{\B})^T(A_{(\B+\B)}\circ Q)\x^{\B}=f$ and $\supp(f)=\A$. Therefore, $(\x^{\B})^T\tilde{Q}\x^{\B}=(\x^{\B})^TQ\x^{\B}=f$.}

Note that $\tilde{Q}$ is block-diagonal (up to permutation) and each block of $\tilde{Q}$ is a principal submatrix of $Q$, so $\tilde{Q}$ is PSD. Thus $f\in\Sigma_{*}(\A)$.
\end{proof}

Consequently, we obtain a hierarchy of inner approximations of $\Sigma(\A)$ which reaches $\Sigma(\A)$ in a finite number of steps.

\begin{remark}
For each $k\ge1$, $Q\in\mathbb{S}_+^r(B_{\A}^{(k)})$ is block-diagonal (up to permutation). Thus checking membership in $\Sigma_k(\A)$ boils down to solving an SDP problem involving SDP matrices of small sizes if each block has a small size with respect to the original matrix. This might significantly reduce the overall computational cost.
\end{remark}

The next result states that $\Sigma_1(\A)=\Sigma(\A)$ always holds in the quadratic case.
\begin{theorem}\label{sec3-thm2}
For a finite set $\A\subseteq\N^n$, if for all $\a=(\alpha_i)\in\A$, $\sum_{i=1}^n\alpha_i\le2$, then $\Sigma_1(\A)=\Sigma(\A)$.
\end{theorem}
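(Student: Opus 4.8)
The plan is to prove the inclusion $\Sigma(\A)\subseteq\Sigma_1(\A)$ --- the reverse being part of the already-noted chain $\Sigma_1(\A)\subseteq\cdots\subseteq\Sigma_*(\A)\subseteq\Sigma(\A)$ --- in the strong form that, in the quadratic case, \emph{every} Gram matrix $Q$ of \emph{every} $f\in\Sigma(\A)$ already satisfies $\supp(Q)\subseteq\supp(B_\A^{(1)})$, so that $Q$ itself certifies $f\in\Sigma_1(\A)$. The first step is to record that, since every exponent in $\A$ has total degree at most $2$, the monomial basis $\B$ associated with $\A$ by the conventions of the paper (in particular, $\B$ is contained in half of the relevant Newton polytope) consists only of exponents of degree at most $1$; that is, $\B\subseteq\N^n_1=\{\mathbf 0,\e_1,\dots,\e_n\}$, where $\e_i$ denotes the $i$-th standard unit vector.

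The heart of the proof is the elementary fact that, for $\B\subseteq\{\mathbf 0,\e_1,\dots,\e_n\}$, the map sending an unordered pair $\{\b,\g\}$ with $\b,\g\in\B$ to its sum $\b+\g$ is \emph{injective}. I would verify this by a case distinction on the total degree $m\in\{0,1,2\}$ of $\a:=\b+\g$: if $m=0$ then $\b=\g=\mathbf 0$; if $m=1$ then $\a=\e_i$ for a unique $i$, whose only decomposition into two elements of $\B$ is $\mathbf 0+\e_i$; if $m=2$ then either $\a=2\e_i$ (a single coordinate equal to $2$), forcing $\b=\g=\e_i$, or $\a=\e_i+\e_j$ with $i\ne j$ (exactly two coordinates equal to $1$), forcing $\{\b,\g\}=\{\e_i,\e_j\}$. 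This is the only place where the degree-$\le 2$ hypothesis is used, and it carries essentially all the content of the theorem; everything afterwards is bookkeeping.

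Granting injectivity, fix $f\in\Sigma(\A)$ and a Gram matrix $Q\in\mathbb{S}_+^r$, so that $f=(\x^\B)^TQ\x^\B=\sum_{\b,\g\in\B}Q_{\b\g}\x^{\b+\g}$. Collecting terms and using injectivity, the coefficient of each monomial $\x^\a$ (with $\a\in\B+\B$) in this expansion is a \emph{single} entry of $Q$ up to a nonzero scalar --- namely $Q_{\b\b}$ when $\a=2\b$, and $2Q_{\b\g}$ when $\a=\b+\g$ with $\b\ne\g$. Since $\supp(f)\subseteq\A$, matching coefficients forces $Q_{\b\g}=0$ whenever $\b+\g\notin\A$, hence $\supp(Q)\subseteq\{(\b,\g):\b+\g\in\A\}\subseteq\{(\b,\g):\b+\g\in\A\cup 2\B\}=\supp(C_\A^{(1)})\subseteq\supp(B_\A^{(1)})$, the last inclusion because the block-closure only adds entries. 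Therefore $B_\A^{(1)}\circ Q=Q$, i.e.\ $Q\in\mathbb{S}_+^r(B_\A^{(1)})$, and $f\in\Sigma_1(\A)$. Equivalently, one can stay inside the iterative scheme: injectivity shows that the support-extension step at $k=2$ reproduces exactly $\supp(B_\A^{(1)})$, so $C_\A^{(2)}=B_\A^{(1)}$ and, by idempotency of the block-closure, $B_\A^{(2)}=B_\A^{(1)}$; the sequence $(B_\A^{(k)})_{k\ge1}$ is already stabilized at $k=1$, and $\Sigma_1(\A)=\Sigma_*(\A)=\Sigma(\A)$ then follows from Theorem~\ref{sec3-thm1}. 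The only genuine obstacle is the injectivity claim; once the degree bound has confined the basis to degree $\le1$, the remainder is immediate.
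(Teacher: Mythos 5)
Your proof is correct and follows essentially the same route as the paper's: both arguments rest on the observation that in the quadratic case each exponent in $\B+\B$ has a unique decomposition as a sum of two basis elements, so coefficient matching forces any Gram matrix $Q$ to already satisfy $C_{\A}^{(1)}\circ Q=Q$, hence $B_{\A}^{(1)}\circ Q=Q$. You merely isolate this injectivity as an explicit lemma (and add the optional detour of showing $B_{\A}^{(2)}=B_{\A}^{(1)}$ and invoking Theorem~\ref{sec3-thm1}), whereas the paper performs the same check by a direct case analysis on the indices of the standard basis $\{\mathbf{0},\e_1,\dots,\e_n\}$.
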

\begin{proof}
We only need to prove the inclusion $\Sigma(\A)\subseteq\Sigma_1(\A)$. Suppose $f\in\Sigma(\A)$ is a quadratic polynomial with $\supp(f)=\A$. Let $\B=\{\mathbf{0}\}\cup\{\be_k\}_{k=1}^n$ be the standard monomial basis and $Q=[q_{ij}]_{i,j=0}^n$ a Gram matrix of $f$. To show $f\in\Sigma_1(\A)$, it suffices prove that $Q\in\mathbb{S}_+^{n+1}(C_{\A}^{(1)})$, which holds if $[C_{\A}^{(1)}]_{ij}=0$ implies $q_{ij}=0$ for all $i,j$. Clearly, $[C_{\A}^{(1)}]_{00}=0$ implies $q_{00}=0$.  If $i=0,j>0$, from $[C_{\A}^{(1)}]_{0j}=0$ one has $\be_j\notin\A$. If $i>0,j=0$, from $[C_{\A}^{(1)}]_{i0}=0$ one has $\be_i\notin\A$. If $i,j>0$, from $[C_{\A}^{(1)}]_{ij}=0$ one has $\be_i+\be_j\notin\A$. In any of these three cases, one has $q_{ij}=0$ as desired.
\end{proof}

\section{A block SDP hierarchy for unconstrained POPs}\label{sec4}
In this section, we consider the unconstrained polynomial optimization problem:
\begin{equation*}
(\textrm{P}) : \quad \theta^* := \inf_{\x}\{f(\x) : \x\in\R^n\}
\end{equation*}
with $f(\x)\in\R[\x]$, and exploit the sparse SOS decompositions in Section\,\ref{sec3} to establish a block SDP hierarchy for $(\textrm{P})$.

Obviously, (P) is equivalent to
\begin{equation*}
(\textrm{P'}) : \quad \theta^* = \sup_{\lambda}\{\lambda\mid f(\x)-\lambda\ge0\}.
\end{equation*}
Replacing the nonnegativity condition by the stronger SOS condition, we obtain an SOS relaxation of (P):
\begin{equation*}
(\textrm{SOS}) : \quad \theta_{sos} := \sup_{\lambda}\{\lambda\mid f(\x)-\lambda\in\Sigma(\A)\},
\end{equation*}
with $\A=\{\mathbf{0}\}\cup\supp(f)$.
If $f$ is sparse and we replace the nonnegativity condition in (P') by the sparse SOS conditions \eqref{sec3-eq1}, then we obtain a hierarchy of sparse SOS relaxations of (P):
\begin{equation}\label{sec4-eq1}
(\textrm{P}^k)^* : \quad \theta_{k} := \sup_{\lambda}\{\lambda\mid f(\x)-\lambda\in\Sigma_k(\A)\}, \quad k=1,2,\ldots.
\end{equation}
For each $k$, $(\textrm{P}^k)^*$ corresponds to a block SDP problem. In addition, let
\begin{equation}\label{sec4-eq2}
(\textrm{TSSOS}) : \quad \theta_{tssos} := \sup_{\lambda}\{\lambda\mid f(\x)-\lambda\in\Sigma_{*}(\A)\}.
\end{equation}
Then we have the following hierarchy of lower bounds for the optimum of (P):
\begin{equation*}
\theta^*\ge\theta_{sos}=\theta_{tssos}\ge\cdots\ge\theta_2\ge\theta_1,
\end{equation*}
where the equality $\theta_{sos}=\theta_{tssos}$ follows from Theorem \ref{sec3-thm1}.

Let $\B$ be the monomial basis. For each $k\ge1$, the dual of $(\textrm{P}^k)^*$ is the following block moment problem
\begin{equation}\label{sec4-eq3}
(\textrm{P}^k):\quad
\begin{cases}
\inf\quad &L_{\y}(f)\\
\textrm{s.t.}\quad &B_{\A}^{(k)}\circ M_{\B}(\y)\succeq0,\\
&y_{\mathbf{0}}=1.
\end{cases}
\end{equation}

\newtssos{We call \eqref{sec4-eq1} and \eqref{sec4-eq3} the {\em TSSOS moment-SOS hierarchy} ({\em TSSOS hierarchy} in short) for the original problem (P) and call $k$ the {\em sparse order}.}

\begin{proposition}
For each $k\ge1$, there is no duality gap between ($\textrm{P}^k$) and $(\textrm{P}^k)^*$.
\end{proposition}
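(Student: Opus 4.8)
The plan is to establish the absence of a duality gap by invoking standard conic duality, specifically a Slater-type (strict feasibility) argument for the primal block moment problem $(\textrm{P}^k)$. First I would note that $(\textrm{P}^k)$ is a conic (semidefinite) program: minimize the linear functional $L_{\y}(f)$ over the sequence $\y=(y_{\a})_{\a\in\S^{(k)}}$ subject to the linear equation $y_{\mathbf{0}}=1$ and the PSD constraint $B_{\A}^{(k)}\circ M_{\B}(\y)\succeq 0$; its dual is precisely $(\textrm{P}^k)^*$ through the coefficient matching conditions \eqref{sec3-eq4} restricted to $\supp(B_{\A}^{(k)})$. By the conic duality theorem, weak duality always holds, and strong duality (zero gap, with dual attainment) follows once the primal admits a strictly feasible point, i.e.\ a $\y$ with $B_{\A}^{(k)}\circ M_{\B}(\y)\succ 0$ and $y_{\mathbf 0}=1$.

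The key step is therefore to exhibit such a strictly feasible point. The natural candidate is the moment sequence of a measure with positive density on $\R^n$, for instance the standard Gaussian: take $\y^*=(y^*_{\a})$ with $y^*_{\a}=\int_{\R^n}\x^{\a}\,d\mu(\x)$ where $d\mu=(2\pi)^{-n/2}e^{-\|\x\|^2/2}\,d\x$, rescaled so that $y^*_{\mathbf 0}=1$ (which it already is). For this choice the full moment matrix $M_{\B}(\y^*)$ satisfies $(\x^{\B})^TM_{\B}(\y^*)\x^{\B}$-type quadratic forms $\bv^TM_{\B}(\y^*)\bv=\int_{\R^n}\big(\sum_{\b\in\B}v_{\b}\x^{\b}\big)^2 d\mu(\x)>0$ for every nonzero $\bv$, because the monomials $\{\x^{\b}\}_{\b\in\B}$ are linearly independent functions and $\mu$ has full support; hence $M_{\B}(\y^*)\succ 0$. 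Since $B_{\A}^{(k)}$ is, up to simultaneous permutation of rows and columns, block-diagonal, the Hadamard product $B_{\A}^{(k)}\circ M_{\B}(\y^*)$ is the corresponding block-diagonal principal submatrix arrangement of $M_{\B}(\y^*)$ (zeroing out the off-block entries), and each diagonal block is a principal submatrix of the positive definite matrix $M_{\B}(\y^*)$, hence positive definite; therefore $B_{\A}^{(k)}\circ M_{\B}(\y^*)\succ 0$. Thus $\y^*$ is strictly feasible.

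With strict feasibility of $(\textrm{P}^k)$ in hand, I would then invoke the conic duality theorem (e.g.\ the SDP strong duality statement): if the primal is strictly feasible and its optimal value is finite, then there is no duality gap and the dual optimum $(\textrm{P}^k)^*$ is attained; even without finiteness, the Slater condition on the minimization side guarantees $\theta_k=$ the primal optimal value with no gap. Since $\Sigma_k(\A)$ is a nonempty set and $f$ is bounded below on the problem of interest (or, if not, both sides equal $-\infty$ and there is trivially no gap), the value is finite and we conclude. The main obstacle — really the only subtle point — is making the reduction to a standard-form conic program clean: one must be careful that the free variables are exactly the $y_{\a}$ for $\a\in\S^{(k)}=\bigcup_{[B_{\A}^{(k)}]_{\b\g}=1}\{\b+\g\}$ (the entries of the moment matrix that are not forced to zero by $B_{\A}^{(k)}$), so that the linear map $\y\mapsto B_{\A}^{(k)}\circ M_{\B}(\y)$ has the right adjoint structure matching the $A_{\a}$'s of \eqref{sec3-eq4}; once this bookkeeping is set up, the Slater argument above is routine and standard references on SDP duality (e.g.\ the duality theory cited for the dense moment-SOS hierarchy) apply verbatim.
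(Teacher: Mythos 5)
Your proof is correct and follows essentially the same route as the paper: the paper simply cites Proposition 3.1 of \cite{las1} (which is exactly the Slater-type strict-feasibility argument via the moment sequence of a fully supported measure) together with the observation that each block of $B_{\A}^{(k)}\circ M_{\B}(\y)$ is a principal submatrix of $M_{\B}(\y)$, which is precisely how you transfer positive definiteness to the block-diagonal constraint.
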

\begin{proof}
This easily follows from Proposition 3.1 of \cite{las1} for the dense case and the observation that each block of $B_{\A}^{(k)}\circ M_{\B}(\y)$ is a principal submatrix of $M_{\B}(\y)$.
\end{proof}

\begin{example}
Consider the polynomial $f=1+x_1^4+x_2^4+x_3^4+x_1x_2x_3+x_2$. A monomial basis for $f$ is $\{1,x_2,x_1^2,x_2^2,x_1x_3,x_3^2,x_1,x_2x_3,x_3,x_1x_2\}.$ Then
\begin{equation*}
C_{\A}^{(1)}=\begin{bmatrix}
1&1&1&1&0&1&0&0&0&0\\
1&1&0&0&1&0&0&0&0&0\\
1&0&1&1&0&1&0&0&0&0\\
1&0&1&1&0&1&0&0&0&0\\
0&1&0&0&1&0&0&0&0&0\\
1&0&1&1&0&1&0&0&0&0\\
0&0&0&0&0&0&1&1&0&0\\
0&0&0&0&0&0&1&1&0&0\\
0&0&0&0&0&0&0&0&1&1\\
0&0&0&0&0&0&0&0&1&1
\end{bmatrix}
\end{equation*}
and this yields
\begin{equation*}
B_{\A}^{(1)}=\begin{bmatrix}
1&1&1&1&1&1&0&0&0&0\\
1&1&1&1&1&1&0&0&0&0\\
1&1&1&1&1&1&0&0&0&0\\
1&1&1&1&1&1&0&0&0&0\\
1&1&1&1&1&1&0&0&0&0\\
1&1&1&1&1&1&0&0&0&0\\
0&0&0&0&0&0&1&1&0&0\\
0&0&0&0&0&0&1&1&0&0\\
0&0&0&0&0&0&0&0&1&1\\
0&0&0&0&0&0&0&0&1&1
\end{bmatrix}.
\end{equation*}
Furthermore, we have
\begin{equation*}
B_{\A}^{(2)}=C_{\A}^{(2)}=\begin{bmatrix}
1&1&1&1&1&1&0&0&0&0\\
1&1&1&1&1&1&0&0&0&0\\
1&1&1&1&1&1&0&0&0&0\\
1&1&1&1&1&1&0&0&0&0\\
1&1&1&1&1&1&0&0&0&0\\
1&1&1&1&1&1&0&0&0&0\\
0&0&0&0&0&0&1&1&1&1\\
0&0&0&0&0&0&1&1&1&1\\
0&0&0&0&0&0&1&1&1&1\\
0&0&0&0&0&0&1&1&1&1
\end{bmatrix}.
\end{equation*}
Thus $(B_{\A}^{(k)})_{k\ge1}$ stabilizes at $k=2$. Then solve the SDPs $(\textrm{P}^1)$, $(\textrm{P}^2)$ and we obtain $\theta_{1}=\theta_{2}=\theta_{tssos}=\theta_{sos}=\theta^*\approx0.4753$.
\end{example}

\medskip

\subsection*{Relationship with DSOS/SDSOS optimization}
The following definitions of DSOS and SDSOS have been introduced in \cite{ahmadi2014dsos}. For more details the interested reader is referred to \cite{ahmadi2014dsos}.

A symmetric matrix $Q\in\mathbb{S}^r$ is {\em diagonally dominant} if $Q_{ii}\ge\sum_{j\ne i}|Q_{ij}|$ for $i=1,\ldots,r$. We say that a polynomial $f(\x)\in\R[\x]$ is a {\em diagonally dominant sum of squares} (DSOS) polynomial if it admits a Gram matrix representation \eqref{sec2-eq2} with a diagonally dominant Gram matrix $Q$. We denote the set of DSOS polynomials by $DSOS$.

A symmetric matrix $Q\in\mathbb{S}^r$ is {\em scaled diagonally dominant} if there exists a positive definite $r\times r$ diagonal matrix $D$ such that $DAD$ is diagonally dominant. We say that a polynomial $f(\x)\in\R[\x]$ is a {\em scaled diagonally dominant sum of squares} (SDSOS) polynomial if it admits a Gram matrix representation \eqref{sec2-eq2} with a scaled diagonally dominant Gram matrix $Q$. We denote the set of SDSOS polynomials by $SDSOS$.

For a finite set $\A\subseteq\N^n$, let
$$\DSOS(\A):=\Sigma(\A)\cap DSOS$$
and
$$\SDSOS(\A):=\Sigma(\A)\cap SDSOS.$$
Clearly, it holds that $\DSOS(\A)\subseteq\SDSOS(\A)\subseteq\Sigma(\A)$.
\begin{theorem}\label{sec4-thm1}
For a finite set $\A\subseteq\N^n$, one has $\SDSOS(\A)\subseteq\Sigma_{1}(\A)$.
\end{theorem}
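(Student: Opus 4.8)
The plan is to start from a scaled diagonally dominant (SDD) Gram matrix of $f$ and to modify it, block by block, into a PSD Gram matrix whose sparsity pattern is dominated by $B_{\A}^{(1)}$. The first ingredient I would invoke is the structural description of SDD matrices from \cite{ahmadi2014dsos}: a matrix $Q\in\mathbb{S}^r$ is SDD if and only if $Q=\sum_i V_i$ with each $V_i\in\mathbb{S}_+^r$ having all nonzero entries inside one $2\times2$ principal submatrix, indexed by a pair $\{\b_i,\g_i\}\subseteq\B$ (allowing $\b_i=\g_i$, in which case $V_i$ is a single nonnegative diagonal entry). Given $f\in\SDSOS(\A)$, I would first pass to such an SDD Gram matrix $Q$ of $f$ \emph{with respect to the basis $\B$}: this is legitimate because, by the Newton polytope method \cite{re}, every SOS (hence every SDSOS) representation of $f$ is supported on $\B$, and the restriction of an SDD matrix to a principal submatrix is again SDD. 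Writing $Q=\sum_i V_i$, we get $f=\sum_i p_i$ with $p_i:=(\x^{\B})^TV_i\x^{\B}$ a nonnegative polynomial supported in $\{2\b_i,\b_i+\g_i,2\g_i\}$.

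The heart of the argument is that the cross terms living ``off the pattern'' can simply be discarded. Call a pair $\{\b,\g\}$ with $\b\ne\g$ \emph{bad} when $\b+\g\notin\S^{(0)}=\A\cup(2\B)$; these are exactly the off-diagonal positions where $C_{\A}^{(1)}$ vanishes. Fix a bad exponent $\a$, i.e.\ $\a\notin\A\cup(2\B)$. Since $\a\notin 2\B$, every pair $\b',\g'\in\B$ with $\b'+\g'=\a$ has $\b'\ne\g'$, so such a pair contributes to the coefficient of $\x^{\a}$ in $f$ only through the single cross entry of its block; as $\a\notin\A=\supp(f)$ this coefficient is $0$, which forces $\sum_{i:\,\b_i+\g_i=\a}[V_i]_{\b_i\g_i}=0$. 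I would then replace each such $V_i$ by its diagonal part $\diag([V_i]_{\b_i\b_i},[V_i]_{\g_i\g_i})$, which is still PSD because a PSD matrix has nonnegative diagonal; this alters $\sum_i p_i$ by exactly $-2\big(\sum_{i:\,\b_i+\g_i=\a}[V_i]_{\b_i\g_i}\big)\x^{\a}=0$, leaving $f$ untouched. Repeating this over all bad exponents $\a$ — the replacements for distinct $\a$ affect disjoint sets of blocks, so they do not interfere — produces PSD matrices $V_i'$ with $\sum_i(\x^{\B})^TV_i'\x^{\B}=f$ and with every cross entry now located at a position where $C_{\A}^{(1)}$ (and hence $B_{\A}^{(1)}$) equals $1$. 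Setting $\tilde Q:=\sum_i V_i'\succeq0$, one has $\supp(\tilde Q)\subseteq\supp(C_{\A}^{(1)})\subseteq\supp(B_{\A}^{(1)})$, i.e.\ $B_{\A}^{(1)}\circ\tilde Q=\tilde Q$, so $\tilde Q\in\mathbb{S}_+^r(B_{\A}^{(1)})$ is a Gram matrix of $f$ and $f\in\Sigma_1(\A)$.

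The step I expect to need the most care is the bookkeeping around the set $2\B$ — specifically the point that a bad exponent $\a$ cannot also equal $2\b$ for some $\b\in\B$. This is what guarantees that deleting the bad cross entries never disturbs a diagonal entry, and hence that the polynomial $f$ is genuinely preserved; it also explains why the definition of $\S^{(0)}$ (equivalently, of the support-extension step) includes $2\B$ and not merely $\A$. A secondary point worth double-checking is the reduction to the basis $\B$ at the outset, but this is exactly the classical monomial-elimination argument of \cite{re} combined with the elementary observation that principal submatrices of SDD matrices are SDD.
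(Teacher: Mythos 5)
Your proof is correct and follows essentially the same route as the paper: the matrix you end up with, $\tilde Q=\sum_i V_i'$, is exactly the paper's $\tilde Q=C_{\A}^{(1)}\circ Q$, and your coefficient-matching argument showing that the discarded cross entries at each bad exponent $\a\notin\A\cup 2\B$ sum to zero is the same one the paper uses to verify that $\tilde Q$ remains a Gram matrix of $f$. The only difference is cosmetic: you certify positive semidefiniteness of $\tilde Q$ via the $2\times 2$-block decomposition of scaled diagonally dominant matrices, whereas the paper simply observes that zeroing off-diagonal entries preserves scaled diagonal dominance; both rest on the same structural fact, and your preliminary remark on restricting the SDD Gram matrix to the Newton-polytope basis $\B$ is a point the paper glosses over but handles implicitly in the same way.
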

\begin{proof}
Let $\B$ be a monomial basis with $r=|\B|$. For any $f\in\SDSOS(\A)$, there exists a scaled diagonally dominant Gram matrix $Q\in\mathbb{S}_+^r$ indexed by $\B$. 
We then construct a matrix $\tilde{Q}\in\mathbb{S}^r$ by $\tilde{Q}=C_{\A}^{(1)}\circ Q$, i.e.,
\begin{equation*}
\tilde{Q}_{\b\g}=
\begin{cases}
Q_{\b\g},\quad&\textrm{if } \b+\g\in\A\cup2\B, \\
0,\quad&\textrm{otherwise}.
\end{cases}
\end{equation*}
\newtssos{By construction, $(\x^{\B})^TQ\x^{\B}-(\x^{\B})^T\tilde{Q}\x^{\B}=(\x^{\B})^T(A_{(\B+\B)\backslash(\A\cup2\B)}\circ Q)\x^{\B}=0$ since $(\x^{\B})^T(A_{(\B+\B)}\circ Q)\x^{\B}=f$ and $\supp(f)=\A$.
Thus $(\x^{\B})^T\tilde{Q}\x^{\B}=(\x^{\B})^TQ\x^{\B}=f$.} Note that we only replace off-diagonal entries by zeros in $Q$ and replacing off-diagonal entries by zeros does not affect the scaled diagonal dominance of a matrix. Hence $\tilde{Q}$ is also a scaled diagonally dominant matrix. Moreover, we have $B_{\A}^{(1)}\circ\tilde{Q}=C_{\A}^{(1)}\circ\tilde{Q}=\tilde{Q}$ by construction. Thus $f\in\Sigma_{1}(\A)$.
\end{proof}

Replacing the nonnegativity condition in (P') by the DSOS (resp. SDSOS) condition, we obtain the DSOS (resp. SDSOS) relaxation of (P):
\begin{equation*}
(\textrm{DSOS}) : \quad \theta_{dsos} := \sup_{\lambda}\{\lambda\mid f(\x)-\lambda\in\DSOS(\A)\}
\end{equation*}
and
\begin{equation*}
(\textrm{SDSOS}) : \quad \theta_{sdsos} := \sup_{\lambda}\{\lambda\mid f(\x)-\lambda\in\SDSOS(\A)\}.
\end{equation*}
The above DSOS and SDSOS relaxations for polynomial optimization have been introduced and studied in \cite{ahmadi2014dsos}.
By Theorem \ref{sec4-thm1}, we have the following hierarchy of lower bounds for the optimal value of (P):
\begin{equation*}
\theta^*\ge\theta_{sos}=\theta_{tssos}\ge\cdots\ge\theta_2\ge\theta_1\ge\theta_{sdsos}\ge\theta_{dsos}.
\end{equation*}

\section{A block Moment-SOS hierarchy for constrained POPs}\label{sec5}
In this section, we consider the constrained polynomial optimization problem:
\begin{equation*}
(\textrm{Q}) : \quad \theta^* := \inf_{\x}\{f(\x) : \x\in\mathbf{K}\}
\end{equation*}
where $f(\x)\in\R[\x]$ is a polynomial and $\mathbf{K}\subseteq\R^{n}$ is the basic semialgebraic set
\begin{equation}\label{sec5-eq2}
\mathbf{K} = \{\x\in\R^{n} : g_j(\x)\ge 0, j = 1,\ldots,m\},
\end{equation}
for some polynomials $g_j(\x)\in\R[\x], j = 1,\ldots,m.$

Let $d_j=\lceil\deg(g_j)/2\rceil,j=1,\ldots,m$ and  $d=\max\{\lceil\deg(f)/2\rceil,d_1,\ldots,d_m\}$ where $g_0:=1$. With $\hat{d}\ge d$ being a positive integer, the Lasserre hierarchy \cite{las1} of moment semidefinite relaxations of (Q) is defined by:
\begin{equation}\label{sec6-eq0}
(\textrm{Q}_{\hat{d}}):\quad
\begin{cases}
\inf\quad &L_{\y}(f)\\
\textrm{s.t.}\quad &M_{\hat{d}}(\y)\succeq0,\\
&M_{\hat{d}-d_j}(g_j\y)\succeq0,\quad j=1,\ldots,m,\\
&y_{\mathbf{0}}=1,
\end{cases}
\end{equation}
with optimal value denoted by $\theta_{\hat{d}}$ and we call $\hat{d}$ the {\em relaxation order}.
Let $\N^n_{2(\hat{d}-d_j)}$ be the standard monomial basis for $j=0,\ldots,m$. The dual of~\eqref{sec6-eq0} is an SDP equivalent to the following SOS problem:
\begin{equation}\label{sec6-sos}
(\textrm{Q}_{\hat{d}})^*:\quad
\begin{cases}
\sup\quad & \lambda\\
\textrm{s.t.}\quad & f - \lambda =  s_0+\sum_{j=1}^m s_jg_j \,,\\
& s_j \in \Sigma(\N_{2(\hat{d}-d_j)}^n), \quad j=0,\ldots,m.
\end{cases}
\end{equation}
Let
\begin{equation}\label{supp}
\A = \supp(f)\cup\bigcup_{j=1}^m\supp(g_j).
\end{equation}
Set $\S^{(0)}_{0,\hat{d}}=\A\cup(2\N)^n$ and $\S^{(0)}_{j,\hat{d}}=\emptyset,j=1,\ldots,m$.
Let us define $r_j:=\binom{n+\hat{d}-d_j}{\hat{d}-d_j}$.
For $k\ge1$, we recursively define binary matrices $B_{j,\hat{d}}^{(k)}\in\mathbb{S}^{r_j}\cap\Z_2^{r_j\times r_j}$, indexed by $\N^n_{\hat{d}-d_j}$, $j=0,\ldots,m$ via two successive steps:\\
\newtssos{1) {\bf Support-extension}: define a binary matrix $C_{j,\hat{d}}^{(k)}\in\mathbb{S}^{r_j}\cap\Z_2^{r_j\times r_j}$ with rows and columns indexed by $\N^n_{\hat{d}-d_j}$ by
\begin{equation}\label{sec6-eq7}
[C_{j,\hat{d}}^{(k)}]_{\b\g}:=
\begin{cases}
1,\quad&\textrm{if } (\supp(g_j)+\b+\g)\cap\bigcup_{j=0}^m\S_{j,\hat{d}}^{(k-1)}\ne\emptyset, \\
0,\quad&\textrm{otherwise}.
\end{cases}
\end{equation}
2) {\bf Block-closure}: let $B_{j,\hat{d}}^{(k)}=\overline{C_{j,\hat{d}}^{(k)}}$ and
\begin{equation}\label{sec6-eq9}
\S_{j,\hat{d}}^{(k)}:=\supp(g_j)+\bigcup_{[B_{j,\hat{d}}^{(k)}]_{\b\g}=1}\{\b+\g\}.
\end{equation}}
Therefore with $k\ge1$, we can further consider a block moment relaxations of ($\textrm{Q}_{\hat{d}}$) \eqref{sec6-eq0}:
\begin{equation}\label{sec-eq1}
(\textrm{Q}_{\hat{d}}^k):\quad
\begin{cases}
\inf\quad &L_{\y}(f)\\
\textrm{s.t.}\quad &B_{0,\hat{d}}^{(k)}\circ M_{\hat{d}}(\y)\succeq0,\\
&B_{j,\hat{d}}^{(k)}\circ M_{\hat{d}-d_j}(g_j\y)\succeq0,\quad j=1,\ldots,m,\\
&y_{\mathbf{0}}=1,
\end{cases}
\end{equation}
with optimal value denoted by $\theta^{(k)}_{\hat{d}}$. By construction, we have $\supp(B_{j,\hat{d}}^{(k)})\subseteq\supp(B_{j,\hat{d}}^{(k+1)})$ for all $k\ge1$ and $j=0,\ldots,m$. Hence the sequence of binary matrices $(B_{j,\hat{d}}^{(k)})_{k\ge1}$ stabilizes for all $j$ after a finite number of steps. We denote the stabilized matrices by $B_{j,\hat{d}}^{(*)},j=0,\ldots,m$ and denote the corresponding SDP problem \eqref{sec-eq1} by \newtssos{$(\textrm{Q}_{\hat{d}}^{\textrm{ts}})$} with optimal value $\theta^{*}_{\hat{d}}$.

\begin{theorem}\label{sec6-thm1}
For fixed $\hat{d}\ge d$, the sequence $(\theta^{(k)}_{\hat{d}})_{k\ge1}$ of optimal values of \eqref{sec-eq1} is monotone nondecreasing and in addition, $\theta^{*}_{\hat{d}}=\theta_{\hat{d}}$.
\end{theorem}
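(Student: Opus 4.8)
The plan is to prove the three assertions separately: that $(\theta^{(k)}_{\hat{d}})_{k\ge 1}$ is nondecreasing, that it is bounded above by $\theta_{\hat{d}}$ (so that the stabilized value satisfies $\theta^*_{\hat{d}}\le\theta_{\hat{d}}$), and finally the reverse inequality $\theta^*_{\hat{d}}\ge\theta_{\hat{d}}$, which is where the term-sparsity construction does the real work.

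For monotonicity and the upper bound I would argue purely on the primal (moment) side. Since $\supp(B_{j,\hat{d}}^{(k)})\subseteq\supp(B_{j,\hat{d}}^{(k+1)})$ and each $B_{j,\hat{d}}^{(k)}$ is a block-closure, passing from $k$ to $k+1$ can only merge blocks; hence every block of $B_{j,\hat{d}}^{(k)}\circ M_{\hat{d}-d_j}(g_j\y)$ — which is a principal submatrix of $M_{\hat{d}-d_j}(g_j\y)$ — is a principal submatrix of a block of $B_{j,\hat{d}}^{(k+1)}\circ M_{\hat{d}-d_j}(g_j\y)$. A principal submatrix of a PSD matrix is PSD, so any $\y$ feasible for $(\textrm{Q}_{\hat{d}}^{k+1})$ is feasible for $(\textrm{Q}_{\hat{d}}^{k})$; thus $\theta^{(k)}_{\hat{d}}\le\theta^{(k+1)}_{\hat{d}}$. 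The same principal-submatrix remark applied to the full matrices $M_{\hat{d}}(\y)$ and $M_{\hat{d}-d_j}(g_j\y)$ shows every $\y$ feasible for $(\textrm{Q}_{\hat{d}})$ is feasible for $(\textrm{Q}_{\hat{d}}^{k})$, whence $\theta^{(k)}_{\hat{d}}\le\theta_{\hat{d}}$ for every $k$; since the sequence stabilizes, $\theta^*_{\hat{d}}\le\theta_{\hat{d}}$.

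The reverse inequality I would obtain by passing to the SOS duals. On one hand, by weak duality for $(\textrm{Q}_{\hat{d}}^{\textrm{ts}})$: if $f-\lambda=\sum_{j=0}^m\sigma_jg_j$ with $\sigma_j=(\x^{\B_j})^TG_j\x^{\B_j}$, $G_j\in\mathbb{S}_+^{r_j}(B_{j,\hat{d}}^{(*)})$ and $\B_j=\N^n_{\hat{d}-d_j}$, then for any $\y$ feasible for $(\textrm{Q}_{\hat{d}}^{\textrm{ts}})$ one computes $L_{\y}(f)-\lambda=\sum_{j}\langle G_j,\,B_{j,\hat{d}}^{(*)}\circ M_{\hat{d}-d_j}(g_j\y)\rangle\ge 0$, so the supremum of such $\lambda$ is $\le\theta^*_{\hat{d}}$. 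On the other hand, by the absence of a duality gap for the dense pair, the dense SOS program $(\textrm{Q}_{\hat{d}})^*$ in \eqref{sec6-sos} has value $\theta_{\hat{d}}$ (see~\cite{las1}). Hence it suffices to show that every dense certificate can be sparsified into one of the block form above, i.e.\ that the dense SOS value is $\le$ the block-SOS value. Given $f-\lambda=\sum_{j=0}^m\sigma_jg_j$ with $\sigma_j=(\x^{\B_j})^TQ_j\x^{\B_j}$, $Q_j\in\mathbb{S}_+^{r_j}$, set $\widetilde{Q}_j:=B_{j,\hat{d}}^{(*)}\circ Q_j$ and $\widetilde{\sigma}_j:=(\x^{\B_j})^T\widetilde{Q}_j\x^{\B_j}$. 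Each $\widetilde{Q}_j$ is block-diagonal up to permutation with blocks that are principal submatrices of the PSD matrix $Q_j$, hence $\widetilde{Q}_j\succeq0$ and $\widetilde{\sigma}_j$ has the required sparsity pattern. The crux is to verify $\sum_j\widetilde{\sigma}_jg_j=f-\lambda$. Writing $\S^{(*)}:=\bigcup_{j=0}^m\S^{(*)}_{j,\hat{d}}$, I would combine three facts: (i) each $\widetilde{\sigma}_jg_j$ is supported in $\S^{(*)}_{j,\hat{d}}\subseteq\S^{(*)}$ by \eqref{sec6-eq9}; (ii) each $(\sigma_j-\widetilde{\sigma}_j)g_j$ is supported in $\N^n\setminus\S^{(*)}$ — indeed $\sigma_j-\widetilde{\sigma}_j$ is supported in $\{\b+\g:[B_{j,\hat{d}}^{(*)}]_{\b\g}=0\}$, and if some $\a'+\b+\g$ with $\a'\in\supp(g_j)$ and $[B_{j,\hat{d}}^{(*)}]_{\b\g}=0$ lay in $\S^{(*)}$, then applying the (stabilized) support-extension rule \eqref{sec6-eq7} would give $[C_{j,\hat{d}}^{(*)}]_{\b\g}=1$, hence $[B_{j,\hat{d}}^{(*)}]_{\b\g}=1$ after block-closure, a contradiction; and (iii) $f-\lambda=\sum_j\sigma_jg_j$ is supported in $\supp(f)\cup\{\mathbf{0}\}$, which is already contained in $\S^{(*)}_{0,\hat{d}}$ at the first step (one writes every $\a\in\supp(f)$ as $\b+\g$ with $\b,\g\in\N^n_{\hat{d}}$, using $\deg f\le2\hat{d}$). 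From (i) and (iii), $\sum_j\widetilde{\sigma}_jg_j$ and $f-\lambda$ are both supported in $\S^{(*)}$, so $\sum_j(\sigma_j-\widetilde{\sigma}_j)g_j$ is supported in $\S^{(*)}$; from (ii) it is supported in the complement of $\S^{(*)}$; hence it is $0$. Thus the block-SOS value equals the dense SOS value, and chaining $\theta_{\hat{d}}=(\text{dense SOS value})\le(\text{block-SOS value})\le\theta^*_{\hat{d}}\le\theta_{\hat{d}}$ closes the argument.

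The main obstacle is fact (ii) together with the accompanying bookkeeping. One has to exploit precisely that the matrices $B_{j,\hat{d}}^{(k)}$ have stabilized, which makes $\S^{(*)}$ saturated: no monomial removed from some $\sigma_j$ can be re-created by cancellation among the $g_j$'s, because any such monomial would already have forced its Gram entry to be retained. Carefully tracking which exponents live in which of the sets $\S^{(k)}_{j,\hat{d}}$, $\supp(g_j)+\B_j+\B_j$, $\N^n_{2\hat{d}}$, and checking $\supp(f)\cup\{\mathbf{0}\}\subseteq\S^{(*)}$, is the delicate-but-routine part; everything else (monotonicity, the weak-duality computation, PSD-ness of the Hadamard-truncated Gram matrices) is elementary. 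The one genuinely external ingredient is the no-duality-gap statement for the dense moment--SOS pair, for which I would rely on~\cite{las1}.
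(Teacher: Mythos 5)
Your first paragraph (monotonicity and $\theta^{*}_{\hat{d}}\le\theta_{\hat{d}}$) is exactly the paper's argument. For the reverse inequality, however, you take a genuinely different route, and it has a real gap. You work on the dual side: you sparsify a dense certificate $f-\lambda=\sum_j\sigma_jg_j$ by Hadamard-multiplying each Gram matrix with $B_{j,\hat{d}}^{(*)}$, and your verification that the truncated certificate still represents $f-\lambda$ is correct --- facts (i)--(iii) and the saturation argument via the stabilized support-extension rule \eqref{sec6-eq7} are exactly the right mechanism (this is in essence the constrained analogue of the paper's Theorem~\ref{sec3-thm1}, and it reappears in the paper's Theorem~\ref{sec6-thm3}). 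What this correctly establishes is that the \emph{block-SOS value equals the dense-SOS value}, and, via your weak-duality computation, that the block-SOS value is at most $\theta^{*}_{\hat{d}}$.

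The gap is the link $\theta_{\hat{d}}=(\text{dense SOS value})$. Theorem~\ref{sec6-thm1} is stated with no hypothesis on $\mathbf{K}$, whereas absence of a duality gap between $(\textrm{Q}_{\hat{d}})$ and $(\textrm{Q}_{\hat{d}})^*$ is only guaranteed under a constraint qualification --- Theorem~4.2 of \cite{las1} requires $\mathbf{K}$ to have nonempty interior, and the paper's own no-duality-gap Proposition for $(\textrm{Q}_{\hat{d}}^k)$ explicitly carries that assumption while Theorem~\ref{sec6-thm1} does not. Without it, your chain only yields $(\text{dense SOS value})=(\text{block SOS value})\le\theta^{*}_{\hat{d}}\le\theta_{\hat{d}}$, which does not close. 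The paper sidesteps duality entirely by a primal argument: take any feasible $\y=(y_{\a})_{\a\in\S_{\hat{d}}^{(*)}}$ of $(\textrm{Q}_{\hat{d}}^{\textrm{ts}})$ and extend it by zero to $\overline{\y}$ on $\N^n_{2\hat{d}}$; stabilization under support-extension forces $(\supp(g_j)+\b+\g)\cap\S_{\hat{d}}^{(*)}=\emptyset$ whenever $[B_{j,\hat{d}}^{(*)}]_{\b\g}=0$, so $M_{\hat{d}-d_j}(g_j\overline{\y})=B_{j,\hat{d}}^{(*)}\circ M_{\hat{d}-d_j}(g_j\y)$ and $\overline{\y}$ is feasible for the dense problem with the same objective value, giving $\theta^{*}_{\hat{d}}\ge\theta_{\hat{d}}$ unconditionally. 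To repair your proof, either add the nonempty-interior hypothesis, or transplant your saturation argument to the moment side as the paper does (the two are dual manifestations of the same stabilization property).
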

\begin{proof}
Since $\supp(B_{j,\hat{d}}^{(k)})\subseteq\supp(B_{j,\hat{d}}^{(k+1)})$ and $B_{j,\hat{d}}^{(k)}$ is block-diagonal (up to permutation) for all $j,k$, $(\textrm{Q}_{\hat{d}}^k)$ is a relaxation of $(\textrm{Q}_{\hat{d}}^{k+1})$ and $(\textrm{Q}_{\hat{d}})$. Therefore $(\theta^{(k)}_{\hat{d}})_{k\ge1}$ is nondecreasing and $\theta^{*}_{\hat{d}}\le\theta_{\hat{d}}$. 

Let $\S_{\hat{d}}^{(*)}=\cup_{j=0}^m(\supp(g_j)+\cup_{[B_{j,\hat{d}}^{(*)}]_{\b\g}=1}\{\b+\g\})$. Suppose that $\y=(y_{\a})_{\a\in\S_{\hat{d}}^{(*)}}$ is any feasible solution of
\newtssos{$(\textrm{Q}_{\hat{d}}^{\textrm{ts}})$}. Then define a sequence $\overline{\y}=(\overline{y}_{\a})_{\a\in\N^n_{2\hat{d}}}$ by
$$\overline{y}_{\a}=\begin{cases}y_{\a},\,\quad\textrm{if } \a\in\S_{\hat{d}}^{(*)},\\
0,\quad\quad\textrm{otherwise}.\end{cases}
$$
\newtssos{Because $B_{j,\hat{d}}^{(*)}$ is stabilized under the support-extension operation, by \eqref{sec6-eq7}, one has $(\supp(g_j)+\b+\g)\cap\S_{\hat{d}}^{(*)}=\emptyset$ for all $(\b,\g)\notin\supp(B_{j,\hat{d}}^{(*)})$ for $j=0,\ldots,m$.} Thus we have $M_{\hat{d}-d_j}(g_j\overline{\y})=B_{j,\hat{d}}^{(*)}\circ M_{\hat{d}-d_j}(g_j\y)$ for $j=0,\ldots,m$. Therefore $\overline{\y}$ is also a feasible solution of $(\textrm{Q}_{\hat{d}})$ and hence $L_{\y}(f)=L_{\overline{\y}}(f)\ge\theta_{\hat{d}}$.
Hence $\theta^{*}_{\hat{d}}\ge\theta_{\hat{d}}$ since $\y$ is an arbitrary feasible solution of \newtssos{$(\textrm{Q}_{\hat{d}}^{\textrm{ts}})$}. It follows $\theta^{*}_{\hat{d}}=\theta_{\hat{d}}$.
\end{proof}

\newtssos{\begin{theorem}\label{sec6-thm0}
For fixed $k\ge1$, the sequence $(\theta^{(k)}_{\hat{d}})_{\hat{d}\ge d}$ of optimal values of \eqref{sec-eq1} is monotone nondecreasing.
\end{theorem}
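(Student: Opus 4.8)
The plan is to adapt the classical monotonicity proof for the dense moment hierarchy, namely to show that for every $\hat{d}\ge d$ the relaxation $(\textrm{Q}_{\hat{d}}^k)$ is \emph{weaker} than $(\textrm{Q}_{\hat{d}+1}^k)$, in the sense that every feasible point of the latter restricts to a feasible point of the former with the same objective value. Concretely, given a feasible $\y=(y_{\a})_{\a\in\N^n_{2(\hat{d}+1)}}$ of $(\textrm{Q}_{\hat{d}+1}^k)$, I would set $\y':=(y_{\a})_{\a\in\N^n_{2\hat{d}}}$ and check that $\y'$ is feasible for $(\textrm{Q}_{\hat{d}}^k)$. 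Since $\deg f\le 2d\le 2\hat{d}$, one has $L_{\y'}(f)=L_{\y}(f)$, and the constraint $y_{\mathbf{0}}=1$ is trivially inherited, so the only thing to verify is $B_{j,\hat{d}}^{(k)}\circ M_{\hat{d}-d_j}(g_j\y')\succeq0$ for $j=0,\ldots,m$ (with $g_0=1$, $d_0=0$). Taking the infimum over $\y$ then yields $\theta^{(k)}_{\hat{d}}\le\theta^{(k)}_{\hat{d}+1}$.

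The crux is a comparison lemma between the binary matrices built at relaxation orders $\hat{d}$ and $\hat{d}+1$. Viewing $\N^n_{\hat{d}-d_j}$ as a subset of $\N^n_{\hat{d}+1-d_j}$, I claim that for every $k\ge0$ one has $\S_{j,\hat{d}}^{(k)}\subseteq\S_{j,\hat{d}+1}^{(k)}$ for all $j$, and for every $k\ge1$ and all $\b,\g\in\N^n_{\hat{d}-d_j}$ one has $[C_{j,\hat{d}}^{(k)}]_{\b\g}\le[C_{j,\hat{d}+1}^{(k)}]_{\b\g}$ and $[B_{j,\hat{d}}^{(k)}]_{\b\g}\le[B_{j,\hat{d}+1}^{(k)}]_{\b\g}$. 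This is proved by simultaneous induction on $k$ (over all $j=0,\ldots,m$). The base case $k=0$ is immediate from the definitions of $\S_{j,\hat{d}}^{(0)}$. In the inductive step, the inequality for $C_{j,\hat{d}}^{(k)}$ follows from \eqref{sec6-eq7} and the induction hypothesis $\bigcup_i\S_{i,\hat{d}}^{(k-1)}\subseteq\bigcup_i\S_{i,\hat{d}+1}^{(k-1)}$, because the two membership conditions compared use the \emph{same} shifted set $\supp(g_j)+\b+\g$; the inequality for $B_{j,\hat{d}}^{(k)}=\overline{C_{j,\hat{d}}^{(k)}}$ follows because any path joining $\b$ and $\g$ in the adjacency graph of $C_{j,\hat{d}}^{(k)}$ (all of whose vertices lie in $\N^n_{\hat{d}-d_j}$) is, edge by edge, also a path in the adjacency graph of $C_{j,\hat{d}+1}^{(k)}$; and finally $\S_{j,\hat{d}}^{(k)}\subseteq\S_{j,\hat{d}+1}^{(k)}$ follows from \eqref{sec6-eq9}.

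With the lemma available, feasibility of $\y'$ is verified blockwise. Each $B_{j,\hat{d}}^{(k)}$ is a block-closure, hence block-diagonal up to permutation, so $B_{j,\hat{d}}^{(k)}\circ M_{\hat{d}-d_j}(g_j\y')\succeq0$ is equivalent to $M_{\hat{d}-d_j}(g_j\y')[C']\succeq0$ for every connected component $C'$ of its adjacency graph. Because $\y'$ is a restriction of $\y$ and every exponent occurring in $M_{\hat{d}-d_j}(g_j\y')$ has degree at most $2d_j+2(\hat{d}-d_j)=2\hat{d}$, one has $M_{\hat{d}-d_j}(g_j\y')[C']=M_{\hat{d}+1-d_j}(g_j\y)[C']$. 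By the lemma, the edges of $C'$ survive in the graph of $B_{j,\hat{d}+1}^{(k)}$, so $C'$ is contained in a single connected component $C$ of that graph; hence $M_{\hat{d}+1-d_j}(g_j\y)[C']$ is a principal submatrix of $M_{\hat{d}+1-d_j}(g_j\y)[C]$, which is PSD since $\y$ is feasible for $(\textrm{Q}_{\hat{d}+1}^k)$. As principal submatrices of PSD matrices are PSD, $M_{\hat{d}-d_j}(g_j\y')[C']\succeq0$ for all $C'$ and all $j$, so $\y'$ is feasible for $(\textrm{Q}_{\hat{d}}^k)$ and the proof concludes as in the first paragraph.

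I expect the comparison lemma to be the main obstacle, mainly as a matter of care rather than depth: one is tempted to argue simply that $B_{j,\hat{d}}^{(k)}$ is entrywise dominated by $B_{j,\hat{d}+1}^{(k)}$ and that Hadamard-multiplying by a sparser binary matrix preserves positive semidefiniteness — but the latter is false in general. What actually makes the blockwise argument go through is the structural fact that these matrices are block-closures and that the connected-component partition at order $\hat{d}$ refines the restriction of the one at order $\hat{d}+1$; this refinement property is precisely what the simultaneous induction on $C_{j,\hat{d}}^{(k)}$, $B_{j,\hat{d}}^{(k)}$ and $\S_{j,\hat{d}}^{(k)}$ is designed to deliver.
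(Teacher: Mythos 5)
Your proposal is correct and follows essentially the same route as the paper: an induction on $k$ establishing $\supp(C_{j,\hat{d}}^{(k)})\subseteq\supp(C_{j,\hat{d}+1}^{(k)})$, $\supp(B_{j,\hat{d}}^{(k)})\subseteq\supp(B_{j,\hat{d}+1}^{(k)})$ and $\S_{j,\hat{d}}^{(k)}\subseteq\S_{j,\hat{d}+1}^{(k)}$, followed by the observation that the block-diagonal structure makes $(\textrm{Q}_{\hat{d}}^{k})$ a relaxation of $(\textrm{Q}_{\hat{d}+1}^{k})$. You merely spell out more explicitly than the paper why entrywise domination plus the block-closure (component-refinement) structure yields the blockwise principal-submatrix argument, which is exactly the point the paper compresses into ``together with the fact that $B_{j,\hat{d}}^{(k)},B_{j,\hat{d}+1}^{(k)}$ are block-diagonal (up to permutation)''.
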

\begin{proof}
We only need to show that $\supp(B_{j,\hat{d}}^{(k)})\subseteq\supp(B_{j,\hat{d}+1}^{(k)})$ for all $j,k$ since this together with the fact that $B_{j,\hat{d}}^{(k)},B_{j,\hat{d}+1}^{(k)}$ are block-diagonal (up to permutation) implies that $(\textrm{Q}_{\hat{d}}^{k})$ is a relaxation of $(\textrm{Q}_{\hat{d}+1}^{k})$ and hence $\theta^{(k)}_{\hat{d}}\le\theta^{(k)}_{\hat{d}+1}$. Let us prove this conclusion by induction on $k$. For $k=1$, by $\eqref{sec6-eq7}$, we have $\supp(C_{j,\hat{d}}^{(1)})\subseteq\supp(C_{j,\hat{d}+1}^{(1)})$ for $j=0,\ldots,m$, which implies that $\supp(B_{j,\hat{d}}^{(1)})\subseteq\supp(B_{j,\hat{d}+1}^{(1)})$ for $j=0,\ldots,m$. Now assume that $\supp(B_{j,\hat{d}}^{(k)})\subseteq\supp(B_{j,\hat{d}+1}^{(k)})$, $j=0,\ldots,m$ hold for a given $k \geq 1$. By $\eqref{sec6-eq9}$ and by the induction hypothesis, we have $\S_{j,\hat{d}}^{(k)}\subseteq\S_{j,\hat{d}+1}^{(k)}$ for all $j$. Again by $\eqref{sec6-eq7}$, we have $\supp(C_{j,\hat{d}}^{(k+1)})\subseteq\supp(C_{j,\hat{d}+1}^{(k+1)})$ which implies $\supp(B_{j,\hat{d}}^{(k+1)})\subseteq\supp(B_{j,\hat{d}+1}^{(k+1)})$ for $j=0,\ldots,m$. Thus we complete the induction.
\end{proof}

Consequently combining Theorem \ref{sec6-thm1} and Theorem \ref{sec6-thm0}, we obtain the following two-level hierarchy of lower bounds for the optimal value of $(\textrm{Q})$:
\begin{equation}
\begin{matrix}
\theta^{(1)}_{d}&\le&\theta^{(2)}_{d}&\le&\cdots&\le&\theta^{*}_{d}=\theta_{d}\\
\vge&&\vge&&&&\vge\\
\theta^{(1)}_{d+1}&\le&\theta^{(2)}_{d+1}&\le&\cdots&\le&\theta^{*}_{d+1}=\theta_{d+1}\\
\vge&&\vge&&&&\vge\\
\vdots&&\vdots&&\vdots&&\vdots\\
\vge&&\vge&&&&\vge\\
\theta^{(1)}_{\hat{d}}&\le&\theta^{(2)}_{\hat{d}}&\le&\cdots&\le&\theta^{*}_{\hat{d}}=\theta_{\hat{d}}\\
\vge&&\vge&&&&\vge\\
\vdots&&\vdots&&\vdots&&\vdots\\
\end{matrix}
\end{equation}}

For each $j=1,\ldots,m$, writing $M_{\hat{d}-d_j}(g_j\y)=\sum_{\a}D_{\a}^jy_{\a}$ for appropriate symmetric matrices $\{D_{\a}^j\}$, then the dual of $(\textrm{Q}_{\hat{d}}^k)$ reads as
\begin{equation}\label{sec6-eq1}
(\textrm{Q}_{\hat{d}}^k)^*:\quad
\begin{cases}
\sup\quad &\lambda\\
\textrm{s.t.}\quad &\langle Q_0,A_{\a}\rangle+\sum_{j=1}^m\langle  Q_j,D_{\a}^j\rangle+\lambda\delta_{\mathbf{0}\a}=f_{\a},\forall\a\in\S_{\hat{d}}^{(k)},\\
&Q_j\in\mathbb{S}_+^{r_j}(B_{j,\hat{d}}^{(k)}),\quad j=0,\ldots,m,
\end{cases}
\end{equation}
where $\S_{\hat{d}}^{(k)}=\cup_{j=0}^m(\supp(g_j)+\cup_{[B_{j,\hat{d}}^{(k)}]_{\b\g}=1}\{\b+\g\})$, $A_{\a}$ is defined in Section~\ref{sec3} and $\delta_{\mathbf{0}\a}$ is the usual Kronecker symbol.

\newtssos{We call \eqref{sec-eq1} and \eqref{sec6-eq1} the {\em TSSOS moment-SOS hierarchy} ({\em TSSOS hierarchy} in short) for the original problem (Q) and call $k$ the {\em sparse order}.}

\begin{proposition}
Let $f\in\R[\x]$ and $\mathbf{K}$ be as in \eqref{sec5-eq2}. Assume that $K$ has a nonempty interior. Then there is no duality gap between $(\textrm{Q}_{\hat{d}}^k)$ and $(\textrm{Q}_{\hat{d}}^k)^*$ for any $\hat{d}\ge d$ and $k\ge1$.
\end{proposition}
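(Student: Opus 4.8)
The plan is to reduce the claim to the classical no-duality-gap result for the dense Lasserre relaxation $(\textrm{Q}_{\hat d})$ by exhibiting a strictly feasible (Slater) point for the primal block moment problem $(\textrm{Q}_{\hat d}^k)$, and then invoke strong duality for SDPs with a Slater point. Concretely, since $\mathbf{K}$ has nonempty interior, one can fix a measure $\mu$ with strictly positive density supported on a small ball inside $\mathbf{K}$ (say, normalized Lebesgue measure on such a ball), and let $\hat{\y}=(\hat{y}_{\a})$ be its moment sequence up to order $2\hat d$, rescaled so that $\hat{y}_{\mathbf 0}=1$. For this $\hat{\y}$ the full moment matrix $M_{\hat d}(\hat{\y})$ is positive definite (a measure whose support has nonempty interior cannot annihilate any nonzero polynomial of bounded degree), and likewise each localizing matrix $M_{\hat d-d_j}(g_j\hat{\y})$ is positive definite because $g_j>0$ on the interior of $\mathbf{K}$, hence on the chosen ball.

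The next step is to observe that each block of $B_{j,\hat d}^{(k)}\circ M_{\hat d-d_j}(g_j\hat{\y})$ is (up to permutation) a principal submatrix of $M_{\hat d-d_j}(g_j\hat{\y})$, because $B_{j,\hat d}^{(k)}$ is block-diagonal up to permutation by the block-closure construction in Section~\ref{sec5}. A principal submatrix of a positive definite matrix is positive definite, so $B_{j,\hat d}^{(k)}\circ M_{\hat d-d_j}(g_j\hat{\y})\succ 0$ for every $j=0,\ldots,m$. Since the only equality constraint is the affine normalization $y_{\mathbf 0}=1$, which $\hat{\y}$ satisfies, $\hat{\y}$ is a strictly feasible point for $(\textrm{Q}_{\hat d}^k)$. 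By the conic duality theorem (Slater's condition for semidefinite programming), strict primal feasibility implies that the primal optimal value is attained — or at least that there is zero duality gap with its dual, which is precisely $(\textrm{Q}_{\hat d}^k)^*$ as written in \eqref{sec6-eq1}. This gives the conclusion for all $\hat d\ge d$ and all $k\ge1$.

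One routine point to handle carefully is the correspondence between the primal $(\textrm{Q}_{\hat d}^k)$ as a block moment SDP and its dual $(\textrm{Q}_{\hat d}^k)^*$: the dual is formulated over the reduced index set $\S_{\hat d}^{(k)}$ rather than all of $\N^n_{2\hat d}$, so one should note that variables $y_{\a}$ with $\a\notin\S_{\hat d}^{(k)}$ do not appear in any block of the constraint matrices (exactly the support-extension stabilization argument used in the proof of Theorem~\ref{sec6-thm1}), and hence can be dropped without affecting the problem; after this cleanup the primal–dual pair is a standard SDP pair and conic duality applies verbatim.

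The main obstacle — really the only non-bookkeeping issue — is verifying that the constructed $\hat{\y}$ yields \emph{strictly} positive definite (not merely PSD) moment and localizing matrices; this is where the nonempty-interior hypothesis on $\mathbf{K}$ is essential. The key fact is that if $\nu$ is a measure whose support contains a nonempty open set $U$, then $L_\nu(p^2)=\int p^2\,d\nu>0$ for every nonzero polynomial $p$, since $p^2$ is nonnegative and strictly positive on a subset of $U$ of positive $\nu$-measure; applying this with $d\nu = g_j\,d\mu$ (which is a positive measure on the ball, as $g_j>0$ there) gives positive definiteness of $M_{\hat d-d_j}(g_j\hat{\y})$. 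Everything else is the elementary observation about principal submatrices together with a citation of SDP strong duality under Slater's condition.
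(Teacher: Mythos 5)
Your proof is correct and follows essentially the same route as the paper: the paper simply cites Theorem 4.2 of Lasserre's paper (whose content is exactly your Slater-point construction from a measure with positive density on a ball inside $\mathbf{K}$) together with the observation that each block of $B_{j,\hat{d}}^{(k)}\circ M_{\hat{d}-d_j}(g_j\y)$ is a principal submatrix of the dense matrix, which is precisely how you transfer strict feasibility to the block SDP. The only nitpick is your claim that $g_j>0$ on the interior of $\mathbf{K}$ (a $g_j$ may vanish at interior points); this is harmless since $\int g_jp^2\,d\mu>0$ anyway for nonzero $p$ whenever $g_j\not\equiv 0$, or one can simply choose the ball to avoid the zero sets of the $g_j$.
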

\begin{proof}
By the duality theory of convex programming, this easily follows from Theorem 4.2 of \cite{las1} for the dense case and the observation that each block of $B_{j,\hat{d}}^{(k)}\circ M_{\hat{d}-d_j}(g_j\y)$ is a principal submatrix of $M_{\hat{d}-d_j}(g_j\y)$ for all $j,k$.
\end{proof}

For any feasible solution of $(\textrm{Q}_{\hat{d}}^k)^*$, multiplying each side of the constraint in \eqref{sec6-eq1} by $\x^{\a}$ for all $\a\in\N^n_{2\hat{d}}$ and summing up yields
\begin{equation}\label{sec6-eq2}
\langle Q_0,\sum_{\a\in\N^n_{2\hat{d}}}A_{\a}\x^{\a}\rangle+\sum_{j=1}^m\langle Q_j,\sum_{\a\in\N^n_{2\hat{d}}}D_{\a}^j\x^{\a}\rangle=f-\lambda.
\end{equation}
Note that $\sum_{\a\in\N^n_{2\hat{d}}}A_{\a}\x^{\a}=\x^{\N^n_{\hat{d}}}\cdot(\x^{\N^n_{\hat{d}}})^T$ and $\sum_{\a\in\N^n_{2\hat{d}}}D_{\a}^j\x^{\a}=g_j\x^{\N^n_{\hat{d}-d_j}}\cdot(\x^{\N^n_{\hat{d}-d_j}})^T$ for $j=1\ldots,m$. Hence we can rewrite (\ref{sec6-eq2}) as
\begin{equation}\label{sec6-eq3}
(\x^{\N^n_{\hat{d}}})^TQ_0\x^{\N^n_{\hat{d}}}+\sum_{j=1}^mg_j(\x^{\N^n_{\hat{d}-d_j}})^TQ_j\x^{\N^n_{\hat{d}-d_j}}=f-\lambda.
\end{equation}
For each $j$, the binary matrix $B_{j,\hat{d}}^{(k)}$ is block-diagonal up to permutation and $B_{j,\hat{d}}^{(k)}$ induces a partition of the monomial basis  $\N^n_{\hat{d}-d_j}$: two vectors $\b,\g \in \N^n_{\hat{d}-d_j}$ belong to the same block if and only if the rows and columns indexed by $\b,\g$ belong to the same block in $B_{j,\hat{d}}^{(k)}$. If some diagonal element of $B_{j,\hat{d}}^{(k)}$ is zero, then the corresponding basis element can be discarded. Assume that $v_{j1}(\x),\ldots,v_{jl_j}(\x)$ are the resulting blocks in this partition and $Q_{j1},\ldots,Q_{jl_j}$ are the corresponding principal submatrices of $Q_j$. Then \eqref{sec6-eq3} reads as
\begin{equation}\label{sec6-eq4}
\sum_{i=1}^{l_0}v_{ji}(\x)^TQ_{ji}v_{ji}(\x)+\sum_{j=1}^mg_j\sum_{i=1}^{l_j}v_{ji}(\x)^TQ_{ji}v_{ji}(\x)=f-\lambda.
\end{equation}
For all $i,j$, the polynomial $s_{ji}:=v_{ji}(\x)^TQ_{ji}v_{ji}(\x)$ is an SOS polynomial since $Q_{ji}$ is PSD. Then we have
\begin{equation}\label{sec6-eq5}
\sum_{i=1}^{l_0}s_{ji}+\sum_{j=1}^mg_j\sum_{i=1}^{l_j}s_{ji}=f-\lambda.
\end{equation}
Notice that \eqref{sec6-eq5} is in fact a {\em sparse} Putinar's representation for the polynomial $f-\lambda$.
This representation is a certificate of positivity on $\mathbf{K}$ for the polynomial $f-\lambda$. Indeed \eqref{sec6-eq5} ensures that $f-\lambda$ is nonnegative on $\mathbf{K}$ and each SOS $s_{ji}$ has an associated Gram matrix  $Q_{ji}$ indexed in the sparse monomial basis $v_{ji}(\x)$.

\begin{example}
Let $f=x_1^4+x_2^4-x_1x_2$ and $\mathbf{K} = \{(x_1,x_2)\in\R^{2} : g_1 = 1-2x_1^2-x_2^2\ge 0\}$. Let $\A=\{(4,0),(0,4),(1,1),(0,0),(2,0),(0,2)\}$ and $\hat{d}=2$. Take $\{1,x_1,x_2,x_1^2,x_1x_2,x_2^2\}$ as a monomial basis. Then
\begin{equation*}
C_{0,2}^{(1)}=\begin{bmatrix}
1&0&0&1&1&1\\
0&1&1&0&0&0\\
0&1&1&0&0&0\\
1&0&0&1&0&1\\
1&0&0&0&1&0\\
1&0&0&1&0&1
\end{bmatrix}
\quad\textrm{ and }\quad
C_{1,2}^{(1)}=\begin{bmatrix}
1&0&0\\
0&1&1\\
0&1&1
\end{bmatrix}.
\end{equation*}
This yields
\begin{equation*}
B_{0,2}^{(1)}=\begin{bmatrix}
1&0&0&1&1&1\\
0&1&1&0&0&0\\
0&1&1&0&0&0\\
1&0&0&1&1&1\\
1&0&0&1&1&1\\
1&0&0&1&1&1
\end{bmatrix}
\quad\textrm{ and }\quad
B_{1,2}^{(1)}=\begin{bmatrix}
1&0&0\\
0&1&1\\
0&1&1
\end{bmatrix}.
\end{equation*}
Furthermore, we have $B_{j,2}^{(2)}=C_{j,2}^{(1)}=B_{j,2}^{(1)},j=1,2$.
Thus $(B_{0,2}^{(k)},B_{1,2}^{(k)})_{k\ge1}$ stabilizes at $k=1$ and ($\textrm{Q}_2^1$) can be read as
\begin{equation*}
(\textrm{Q}_2^1):\quad
\begin{cases}
\inf\quad &y_{40}+y_{04}-y_{11}\\
\textrm{s.t.}\quad &\begin{bmatrix}
y_{00}&&&y_{20}&y_{11}&y_{02}\\
&y_{20}&y_{11}&&&\\
&y_{11}&y_{02}&&&\\
y_{20}&&&y_{40}&y_{31}&y_{22}\\
y_{11}&&&y_{31}&y_{22}&y_{13}\\
y_{02}&&&y_{22}&y_{13}&y_{04}
\end{bmatrix}\succeq0,\\
&\begin{bmatrix}
y_{00}-2y_{20}-y_{02}&&\\
&y_{20}-2y_{40}-y_{22}&y_{11}-2y_{31}-y_{13}\\
&y_{11}-2y_{31}-y_{13}&y_{02}-2y_{22}-y_{04}
\end{bmatrix}\succeq0,\\
&y_{00}=1.
\end{cases}
\end{equation*}
We have $\theta_2^{(1)}=\theta_2^{*}=\theta_2=\theta^*=-0.125$.
\end{example}


\section{Sign-symmetries and a sparse representation theorem for positive polynomials}\label{sec6}
Suppose that the binary matrix $B_{0,\hat{d}}^{(*)}$ is not an all-one matrix. Then as was already noted in Section~\ref{sec5}, the block-diagonal (up to permutation) matrix $B_{0,\hat{d}}^{(*)}$ induces a partition of the monomial basis $\N^n_{\hat{d}}$: two vectors $\b,\g\in\N^n_{\hat{d}}$ belong to the same block if and only if the rows and columns indexed by $\b,\g$ belong to the same block in $B_{0,\hat{d}}^{(*)}$. We next provide an interpretation of this partition in terms of {\em sign-symmetries}, a tool introduced in \cite{lo1} to characterize block-diagonal SOS decompositions for nonnegative polynomials.
\begin{definition}
Given a finite set $\A\subseteq\N^n$, the {\em sign-symmetries} of $\A$ are defined by all vectors $\br\in\Z_2^n$ such that $\br^T\a\equiv0$ $(\textrm{mod }2)$ for all $\a\in\A$.
\end{definition}
\newtssos{For any $\a\in\N^n$, we define $(\a)_2:=(\alpha_1 (\textrm{mod } 2), \dots, \alpha_n (\textrm{mod } 2))\in\Z_2^n$.
We also use the same notation for any subset $\A\subseteq\N^n$, i.e.\,$(\A)_2:=\{(\a)_2\mid\a\in\mathscr{A}\}\subseteq\Z_2^n$.

For a subset $S\subseteq\Z_2^n$, the {\em subspace} spanned by $S$ in $\Z_2^n$, denoted by $\overline{S}$, is the set $\{(\sum_i\bs_i)_2\mid\bs_i\in S\}$ and the {\em orthogonal complement space} of $S$ in $\Z_2^n$, denoted by $S^{\perp}$, is the set $\{\a\in\Z_2^n\mid\a^T\bs\equiv0\,(\textrm{mod }2)\,,\forall\bs\in S\}$.
\begin{remark}
By definition, the set of sign-symmetries of $\A$ is just the orthogonal complement space $(\A)_2^{\perp}$ in $\Z_2^n$. Hence the sign-symmetries of $\A$ can be essentially represented by a basis of the subspace $(\A)_2^{\perp}$ in $\Z_2^n$.
\end{remark}

\begin{lemma}\label{sec4-lm}
Let $S\subseteq\Z_2^n$. Then $(S^{\perp})^{\perp}=\overline{S}$.
\end{lemma}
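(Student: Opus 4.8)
The plan is to reduce the statement to the standard fact that, for a nondegenerate symmetric bilinear form on a finite-dimensional vector space, the orthogonal complement of any subspace has complementary dimension and taking the complement twice recovers the original subspace. The relevant form here is the pairing $\langle\a,\bb\rangle:=\sum_{i=1}^n\alpha_i\beta_i\ (\textrm{mod }2)$ on $\Z_2^n$, which is nondegenerate because the Gram matrix of the standard basis is the identity matrix over $\Z_2$ and hence invertible.

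First I would record two preliminary observations. (i) $\overline{S}$ is a subspace of $\Z_2^n$: it contains $\mathbf{0}$ (the empty sum), it is closed under addition, and over $\Z_2$ the only scalars are $0$ and $1$ so closure under scalar multiplication is automatic. (ii) $S^{\perp}=\overline{S}^{\,\perp}$: the inclusion $\overline{S}^{\,\perp}\subseteq S^{\perp}$ is immediate from $S\subseteq\overline{S}$, and conversely, if $\a\in S^{\perp}$ and $\bs_1,\dots,\bs_{\ell}\in S$, then $\a^T(\sum_i\bs_i)\equiv\sum_i\a^T\bs_i\equiv0\ (\textrm{mod }2)$, so $\a$ annihilates every element of $\overline{S}$. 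Because of (ii), it suffices to prove $(W^{\perp})^{\perp}=W$ for an arbitrary subspace $W\subseteq\Z_2^n$.

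The main step is then the dimension identity $\dim W^{\perp}=n-\dim W$. To get it, pick a basis $\bv_1,\dots,\bv_k$ of $W$ and consider the $\Z_2$-linear map $\phi\colon\Z_2^n\to\Z_2^k$, $\phi(\x)=(\bv_1^T\x,\dots,\bv_k^T\x)$. Its kernel is exactly $W^{\perp}$, and its matrix (with rows $\bv_1^T,\dots,\bv_k^T$) has rank $k$ since the $\bv_i$ are linearly independent, so $\phi$ is surjective and rank--nullity gives $\dim W^{\perp}=n-k$. Applying this twice yields $\dim(W^{\perp})^{\perp}=n-\dim W^{\perp}=\dim W$. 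Since $W\subseteq(W^{\perp})^{\perp}$ trivially (every $\bw\in W$ pairs to $0$ with every element of $W^{\perp}$) and both subspaces have the same finite dimension, they coincide; combined with $S^{\perp}=\overline{S}^{\,\perp}$ this gives $(S^{\perp})^{\perp}=(\overline{S}^{\,\perp})^{\perp}=\overline{S}$.

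The one subtlety worth flagging is that over $\Z_2$ the pairing is not anisotropic — a nonzero vector can be orthogonal to itself, e.g.\ $(1,1,0,\dots,0)$ — so one cannot argue via $W\cap W^{\perp}=0$ or an orthogonal direct-sum decomposition. The argument must rely solely on nondegeneracy, which is why I route the dimension formula through the rank of the explicit map $\phi$ rather than through any splitting of $\Z_2^n$; everything else is routine linear algebra over $\Z_2$.
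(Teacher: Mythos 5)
Your proof is correct. The paper itself dismisses this lemma with ``It is immediate from the definitions,'' so there is no detailed argument to compare against; what you have written is the standard and complete justification. The reduction to subspaces via $S^{\perp}=\overline{S}^{\,\perp}$, the dimension count $\dim W^{\perp}=n-\dim W$ obtained from rank--nullity applied to the evaluation map $\phi$, and the final comparison $W\subseteq(W^{\perp})^{\perp}$ with equal dimensions are all sound. Your remark that the pairing over $\Z_2$ is isotropic (so one cannot argue via $W\cap W^{\perp}=\{\mathbf{0}\}$ or an orthogonal splitting) correctly identifies the one place where a careless argument would fail, and your route through the surjectivity of $\phi$ avoids it. The only point worth double-checking against the paper's definition of $\overline{S}$ as $\{(\sum_i\bs_i)_2\mid\bs_i\in S\}$ is the degenerate case $S=\emptyset$, where the lemma requires $\overline{S}=\{\mathbf{0}\}$; your convention that the empty sum is $\mathbf{0}$ handles this.
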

\begin{proof}
It is immediate from the definitions.
\end{proof}

For an (undirected) graph $G(V,E)$ with $V\subseteq\N^n$, define $\supp(G):=\{\b+\g\mid\{\b,\g\}\in E\}$ and we also use $E(G)$ to indicate the edge set of $G$.
\begin{lemma}\label{sec4-lm2}
Suppose $B$ is a $\{0,1\}$-binary matrix with rows and columns indexed by $\B\subseteq\N^n$ and $G$ is its adjacency graph. Let $\overline{G}$ be the adjacency graph of $\overline{B}$. Then $(\supp(\overline{G}))_2\subseteq\overline{(\supp(G))_2}$.
\end{lemma}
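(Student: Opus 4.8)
The plan is to unwind the definition of the block-closure in graph-theoretic terms and then argue that every support vector coming from $\overline{G}$ is, modulo $2$, a $\Z_2$-linear combination of support vectors coming from $G$. Recall that $\overline{G}$ is obtained by completing each connected component of $G$ to a clique. So an arbitrary element of $\supp(\overline{G})$ has the form $\b+\g$ where $\b,\g\in\B$ lie in the same connected component of $G$ (the case $\b=\g$, if loops are present, is trivial since $2\b\equiv\mathbf{0}$). The key observation is that if $\b$ and $\g$ are in the same connected component, there is a path $\b=\bv_0,\bv_1,\ldots,\bv_\ell=\g$ in $G$ with each $\{\bv_{i-1},\bv_i\}\in E(G)$, hence each $\bv_{i-1}+\bv_i\in\supp(G)$.

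The main computation is then the telescoping identity modulo $2$:
\begin{equation*}
\b+\g=\bv_0+\bv_\ell\equiv\sum_{i=1}^{\ell}(\bv_{i-1}+\bv_i)\pmod 2,
\end{equation*}
which holds because every interior vertex $\bv_i$ ($1\le i\le\ell-1$) appears exactly twice in the right-hand sum and $2\bv_i\equiv\mathbf{0}$. Reducing both sides mod $2$ gives $(\b+\g)_2=(\sum_{i=1}^\ell(\bv_{i-1}+\bv_i))_2$, and since each $(\bv_{i-1}+\bv_i)_2\in(\supp(G))_2$, the right-hand side lies in $\overline{(\supp(G))_2}$ by the definition of the spanned subspace. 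Hence $(\b+\g)_2\in\overline{(\supp(G))_2}$. As $\b+\g$ was an arbitrary element of $\supp(\overline{G})$, this establishes $(\supp(\overline{G}))_2\subseteq\overline{(\supp(G))_2}$.

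I do not anticipate a serious obstacle here; the only point requiring a little care is the bookkeeping in the telescoping sum (making sure interior vertices cancel in pairs mod $2$ and that the endpoints survive), together with correctly quoting that $\overline{S}$ denotes the $\Z_2$-span as defined just before Lemma \ref{sec4-lm}. One should also note explicitly that the path may be taken without repeated vertices, or simply observe that any repeated vertex contributes an even multiple and is harmless, so the argument goes through for an arbitrary walk connecting $\b$ and $\g$.
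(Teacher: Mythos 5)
Your proposal is correct and follows essentially the same route as the paper: identify $\b,\g$ as lying in the same connected component of $G$, take a connecting path, and use mod-$2$ cancellation of the interior vertices to express $(\b+\g)_2$ as a $\Z_2$-combination of edge supports of $G$. The paper performs this cancellation step by step along the path (inductively showing $(\b+\up_i)_2\in\overline{(\supp(G))_2}$), whereas you write the full telescoping sum at once; the two are equivalent.
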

\begin{proof}
By definition, for any $\{\b,\g\}\in E(\overline{G})$, we need to show $(\b+\g)_2\in\overline{(\supp(G))_2}$. By virtue of the graphical description of block-closure, $\b,\g$ belong to the same connected component of $G$. Therefore there is a path connecting $\b$ and $\g$ in $G$: $\{\b,\up_1,\ldots,\up_r,\g\}$ with $\{\b,\up_1\},\{\up_r,\g\}\in E(G)$ and $\{\up_i,\up_{i+1}\}\in E(G),i=1,\ldots,r-1$. From $(\b+\up_1)_2,(\up_1+\up_2)_2\in (\supp(G))_2$, we deduce that $(\b+\up_2)_2\in\overline{(\supp(G))_2}$. Likewise, we can prove $(\b+\up_i)_2\in\overline{(\supp(G))_2}$ for $i=3,\ldots,r+1$ with $\up_{r+1}:=\g$. Hence $(\b+\g)_2\in\overline{(\supp(G))_2}$ as desired.
\end{proof}}

\begin{theorem}\label{sec6-thm2}
For a positive integer $\hat{d}$ and a finite set $\A\subseteq\N^n_{2\hat{d}}$, let $\N^n_{\hat{d}}$ be the standard monomial basis and let us define the sign-symmetries of $\A$ with the columns of the binary matrix $R$. Let $B_{0,\hat{d}}^{(*)}$ be defined as in Section~\ref{sec5}. Then $\b,\g$ belong to the same block in the partition of $\N^n_{\hat{d}}$ induced by $B_{0,\hat{d}}^{(*)}$ if and only if $R^T(\b+\g)\equiv0$ $(\textrm{mod }2)$.
\end{theorem}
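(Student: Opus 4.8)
The plan is to reformulate the statement via sign-symmetries and then prove the two implications separately. As noted in the remark following the definition of sign-symmetries, the sign-symmetries of $\A$ form the subspace $(\A)_2^{\perp}\subseteq\Z_2^n$, which is spanned by the columns of $R$; hence for $\b,\g\in\N^n_{\hat d}$ the condition $R^T(\b+\g)\equiv0\ (\textrm{mod }2)$ says exactly that $(\b+\g)_2\in((\A)_2^{\perp})^{\perp}$, and by Lemma~\ref{sec4-lm} the right-hand side equals $\overline{(\A)_2}$. So it suffices to prove that $\b$ and $\g$ lie in the same block of $B_{0,\hat d}^{(*)}$ if and only if $(\b+\g)_2\in\overline{(\A)_2}$. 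Throughout I would write $G_j^{(k)}$ and $H_j^{(k)}$ for the adjacency graphs of $B_{j,\hat d}^{(k)}$ and $C_{j,\hat d}^{(k)}$, so that $G_j^{(k)}=\overline{H_j^{(k)}}$, and use that $\supp(g_j)\subseteq\A$ for all $j$ by \eqref{supp}.

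For the forward implication I would prove by induction on the sparse order $k$ that $(\supp(G_j^{(k)}))_2\subseteq\overline{(\A)_2}$ for every $j$. If $\{\b,\g\}\in E(H_j^{(1)})$ then by \eqref{sec6-eq7} together with $\S_{0,\hat d}^{(0)}=\A\cup(2\N)^n$ the set $\supp(g_j)+\b+\g$ meets $\A\cup(2\N)^n$, so $(\b+\g)_2\in(\supp(g_j))_2+\big((\A)_2\cup\{\mathbf 0\}\big)\subseteq\overline{(\A)_2}$, and Lemma~\ref{sec4-lm2} gives $(\supp(G_j^{(1)}))_2\subseteq\overline{(\supp(H_j^{(1)}))_2}\subseteq\overline{(\A)_2}$. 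For the step, \eqref{sec6-eq9} and the inductive hypothesis yield $(\S_{j,\hat d}^{(k)})_2\subseteq(\supp(g_j))_2+\big((\supp(G_j^{(k)}))_2\cup\{\mathbf 0\}\big)\subseteq\overline{(\A)_2}$, so by \eqref{sec6-eq7} the edges of $H_j^{(k+1)}$ again have $(\cdot)_2$-image in $\overline{(\A)_2}$, and Lemma~\ref{sec4-lm2} closes the induction. Passing to the stabilized matrices gives $(\supp(G_0^{(*)}))_2\subseteq\overline{(\A)_2}$, which is the forward implication.

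The reverse implication is the substantive part. First I would note two structural facts: since $B_{0,\hat d}^{(*)}$ is stabilized, $[B_{0,\hat d}^{(*)}]_{\bu\bv}=1$ iff $\bu+\bv\in\bigcup_j\S_{j,\hat d}^{(*)}$; and every $\bw\in\N^n_{2\hat d}$ splits as $\bu+\bv$ with $\bu,\bv\in\N^n_{\hat d}$. Hence the reverse implication reduces to proving the inclusion $\bigcup_j\S_{j,\hat d}^{(*)}\supseteq\{\bw\in\N^n_{2\hat d}:(\bw)_2\in\overline{(\A)_2}\}$ (combined with the forward implication this in fact identifies the two sides). I would prove this inclusion starting from $\A\cup 2(\N^n_{\hat d})\subseteq\bigcup_j\S_{j,\hat d}^{(*)}$ — which is clear, as any $\a\in\A$ splits into two elements of $\N^n_{\hat d}$ (possible since $|\a|\le2\hat d$) forming an edge of $B_{0,\hat d}^{(1)}$, and $2\bu=\bu+\bu$ always — and then tracking what the recursion does: block-closure makes the pair relation transitive (merging connected components into cliques), while support-extension re-feeds into the next iteration every sum of two basis elements that lie in a common block. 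The remaining content is a combinatorial statement over $\Z_2^n$: iterating these two operations, subject to the degree cap $\hat d$, generates all of $\{\bw\in\N^n_{2\hat d}:(\bw)_2\in\overline{(\A)_2}\}$. The two delicate points are (a) ``collision'' coordinates $i$ where, in a desired splitting with parities $\mathbf p,\mathbf q$, one has $\mathbf p_i=\mathbf q_i=1$ but the available element is even (possibly $0$) at $i$, which I would circumvent by routing through the always-merged class $P_{\mathbf 0}=\{2\bu:\bu\in\N^n_{\hat d}\}$ (it is one block since $(2\N)^n\subseteq\S_{0,\hat d}^{(0)}$) to produce an element of $\bigcup_j\S_{j,\hat d}^{(*)}$ taking value $\ge 2$ at these coordinates; and (b) keeping all intermediate elements inside $\N^n_{2\hat d}$, which works because a generator of $\overline{(\A)_2}$ has weight $\le 2\hat d$ and any target parity vector has weight $\le\hat d$, so the ``minimal'' witness (the integer vector with coordinates $\mathbf p_i+\mathbf q_i$) has weight $|\mathbf p|+|\mathbf q|\le2\hat d$.

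The main obstacle I anticipate is precisely this saturation argument in the reverse direction: the hard degree cap $\hat d$ on the monomial basis interacts nontrivially with the collision coordinates, and one must verify that the support-extension/block-closure recursion is powerful enough to merge the classes for \emph{every} pair $\mathbf p,\mathbf q$ with $\mathbf p+\mathbf q\in\overline{(\A)_2}$ — not just those linked by a single generator whose intermediate parity vectors happen to stay within weight $\hat d$, which in general they do not. By contrast, the reformulation in the first paragraph and the forward implication are routine once Lemmas~\ref{sec4-lm} and~\ref{sec4-lm2} are available.
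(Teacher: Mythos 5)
Your reformulation via Lemma~\ref{sec4-lm} and your forward implication are correct and essentially identical to the first half of the paper's argument (the inclusion $\supp(G)\subseteq\{\a\in\N^n_{2\hat d}\mid(\a)_2\in\overline{(\A)_2}\}$, proved by induction on the sparse order using Lemma~\ref{sec4-lm2}). The reduction of the reverse implication to the inclusion $\{\bw\in\N^n_{2\hat d}\mid(\bw)_2\in\overline{(\A)_2}\}\subseteq\bigcup_j\S_{j,\hat d}^{(*)}$ is also sound, since $g_0=1$ makes any $\b,\g\in\N^n_{\hat d}$ with $\b+\g$ in that union adjacent in the stabilized graph.

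However, the reverse implication itself is not proved: you describe the two difficulties (collision coordinates and the degree cap) and then state in your closing paragraph that the saturation argument overcoming them is the main obstacle you anticipate. That obstacle is precisely the substance of the theorem, and the paper resolves it with two ingredients you do not supply. First, a parity-invariance statement (Claim I in the paper's proof): if $\a\in\supp(G)$ then every $\a'\in\N^n_{2\hat d}$ with $(\a')_2=(\a)_2$ also lies in $\supp(G)$. Your idea of routing through the all-even class $\{2\bu:\bu\in\N^n_{\hat d}\}$ is the right germ --- the paper connects two representatives $\bs_i+\b_i$ and $\bs_i+\b_i'$ of the same parity by an edge because their sum lies in $(2\N)^n$ --- but it must be stated and proved as a standalone invariance so it can be invoked for the specific representatives constructed later. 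Second, an induction on the length $l$ of a representation $(\b+\g)_2=(\sum_{i=1}^l\bs_i)_2$ with $\bs_i\in(\A)_2$: at each step one must exhibit a pivot $\bu\in\Z_2^n\cap\N^n_{\hat d}$ such that both $\bv=(\sum_{i\le l}\bs_i+\bu)_2$ and $\bo=(\bs_{l+1}+\bu)_2$ have Hamming weight at most $\hat d$; then parity-invariance plus stabilization give the edges $\{\bu,\bv\}$ and $\{\bu,\bo\}$, and block-closure gives $\{\bv,\bo\}$, i.e., $(\sum_{i\le l+1}\bs_i)_2\in(\supp(G))_2$. The existence of such a pivot is exactly where the degree cap is defeated, and it requires a genuine case analysis on the sizes of the sets $J_1=\{s: p_s=1,q_s=0\}$, $J_2=\{s: p_s=q_s=1\}$, $J_3=\{s: p_s=0,q_s=1\}$ (choosing $\bu$ supported on all or part of one of these sets according to which, if any, exceeds $\hat d$). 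Without this construction the argument does not close, so as written the proposal establishes only the ``only if'' half of the theorem.
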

\begin{proof}
Let $G(V,E)$ be the adjacency graph of $B_{0,\hat{d}}^{(*)}$ with vertices $V=\N^n_{\hat{d}}$ and edges $E=\{\{\b,\g\}\mid[B_{0,\hat{d}}^{(*)}]_{\b\g}=1\}$. Then the partition of $\N^n_{\hat{d}}$ induced by $B_{0,\hat{d}}^{(*)}$ corresponds to the connected components of $G$.
Note that every connected component of $G$ is a complete subgraph.

{\bf Claim \uppercase\expandafter{\romannumeral1}.} If $\a\in\supp(G)$, then for any $\a'\in\N^n_{2\hat{d}}$ with $(\a')_2=(\a)_2$, one has $\a'\in\supp(G)$.

{\em Proof of Claim \uppercase\expandafter{\romannumeral1}}. Suppose $\a\in\supp(G)$. If $(\a)_2=(\a')_2=\mathbf{0}$, assume $\a'=\b+\g$ for some $\b,\g\in\N^n_{\hat{d}}$. Then $\b+\g\in(2\N)^n$. Hence $\{\b,\g\}\in E(G)$ and it follows $\a'\in\supp(G)$. Now assume $(\a)_2\ne\mathbf{0}$. For $\bs=(s_i),\bs'=(s_i')\in\Z_2^n$, let $\tau(\bs):=\sum_{i=1}^n s_i$ and we use $\bs\perp\bs'$ to indicate that $s_i=s_i'=1$ holds for no $i$. If $\tau((\a)_2)$ is odd, let $\bs_1,\bs_2\in\Z_2^n\cap\N^n_{\hat{d}}$ such that $(\a)_2=\bs_1+\bs_2$ and $\bs_1\perp\bs_2$. If $\tau((\a)_2)$ is even, we further require that $\tau(\bs_1),\tau(\bs_2)$ have the same parity as $\hat{d}$. It is easy to check that such $\bs_1,\bs_2$ always exist. Then there must exist $\b_1,\b_2\in(2\N)^n$ such that $\bs_1+\b_1,\bs_2+\b_2\in\N^n_{\hat{d}}$ and $\a=(\bs_1+\b_1)+(\bs_2+\b_2)$. It follows that $\{\bs_1+\b_1,\bs_2+\b_2\}\in E(G)$ since $\a\in\supp(G)$ and $B_{0,\hat{d}}^{(*)}$ is stabilized under the support-extension operation. Because $(\a')_2=(\a)_2$, there must exist $\b_1',\b_2'\in(2\N)^n$ such that $\bs_1+\b_1',\bs_2+\b_2'\in\N^n_{\hat{d}}$ and $\a'=(\bs_1+\b_1')+(\bs_2+\b_2')$. Note that $(\bs_1+\b_1)+(\bs_1+\b_1')\in(2\N)^n$ and $(\bs_2+\b_2)+(\bs_2+\b_2')\in(2\N)^n$. Hence $\{\bs_1+\b_1,\bs_1+\b_1'\},\{\bs_2+\b_2,\bs_2+\b_2'\}\in E(G)$, which together with $\{\bs_1+\b_1,\bs_2+\b_2\}\in E(G)$ implies that $\bs_1+\b_1$, $\bs_1+\b_1'$, $\bs_2+\b_2$, $\bs_2+\b_2'$ belong to the same connected component of $G$. So $\{\bs_1+\b_1',\bs_2+\b_2'\}\in E(G)$ and $\a'\in\supp(G)$. The proof of Claim \uppercase\expandafter{\romannumeral1} is finished.

{\bf Claim \uppercase\expandafter{\romannumeral2}.} Let $S=(\A)_2$. The edge set of $G$ is
\begin{equation*}
    E=\{\{\b,\g\}\in V^2\mid(\b+\g)_2\in\overline{S}\},
\end{equation*}
which is equivalent to (by Claim \uppercase\expandafter{\romannumeral1} and the fact that $B_{0,\hat{d}}^{(*)}$ is stabilized under the support-extension operation)
\begin{equation*}
    \supp(G)=\{\a\in\N^n_{2\hat{d}}\mid(\a)_2\in\overline{S}\}.
\end{equation*}

{\em Proof of Claim \uppercase\expandafter{\romannumeral2}}. First we prove that $\supp(G)\subseteq\{\a\in\N^n_{2\hat{d}}\mid(\a)_2\in\overline{S}\}$.
For $j=0,\ldots,m$, let $\S_{j,\hat{d}}^{(k)},C_{j,\hat{d}}^{(k)},B_{j,\hat{d}}^{(k)}$ be defined as in Section\,\ref{sec5} and let $H_j^k,G_j^k$ be the adjacency graphs of $C_{j,\hat{d}}^{(k)},B_{j,\hat{d}}^{(k)}$, respectively.
By construction, one has $\supp(G)=\bigcup_{k\ge0}\bigcup_{j=0}^m\S_{j,\hat{d}}^{(k)}$. It suffices to prove
\begin{equation}\label{sec7-eq2}
\bigcup_{j=0}^m\S_{j,\hat{d}}^{(k)}\subseteq\{\a\in\N^n_{2\hat{d}}\mid(\a)_2\in\overline{S}\}
\end{equation}
for all $k$. Let us do induction on $k\ge0$. It is obvious that \eqref{sec7-eq2} is valid for $k=0$. Now assume that \eqref{sec7-eq2} holds for a given $k \geq 0$. 
For $0\le j\le m$ and for any $\a'\in\supp(H_j^{k+1})$, by \eqref{sec6-eq7} we have $(\supp(g_j)+\a')\cap\bigcup_{j=0}^m\S_{j,\hat{d}}^{(k)}\ne\emptyset$, which implies that $(\supp(g_j)+\a')\cap\{\a\in\N^n_{2\hat{d}}\mid(\a)_2\in\overline{S}\}\ne\emptyset$ by the induction hypothesis.
It follows that $(\a')_2\in\overline{S}$. Thus $\supp(H_j^{k+1})\subseteq\{\a\in\N^n_{2\hat{d}}\mid(\a)_2\in\overline{S}\}$. 
Then by Lemma \ref{sec4-lm2},  $\supp(G_j^{k+1})\subseteq\{\a\in\N^n_{2\hat{d}}\mid(\a)_2\in\overline{S}\}$.
By \eqref{sec6-eq9}, $\S_{j,\hat{d}}^{(k+1)}=\supp(g_j)+\supp(G_j^{k+1})$. Hence $\S_{j,\hat{d}}^{(k+1)}\subseteq\{\a\in\N^n_{2\hat{d}}\mid(\a)_2\in\overline{S}\}$ for all $j$. This completes the induction.

Next we need to prove that $\{\a\in\N^n_{2\hat{d}}\mid(\a)_2\in\overline{S}\}\subseteq\supp(G)$, or equivalently
\begin{equation}\label{sec7-eq3}
\overline{S}\cap\N^n_{2\hat{d}}\subseteq(\supp(G))_2.
\end{equation}
For any $\bs\in\overline{S}\cap\N^n_{2\hat{d}}$, we can write $\bs=(\sum_{i=1}^l\bs_i)_2$ for some $\{\bs_i\}_i\subseteq S$. Let us prove \eqref{sec7-eq3} by induction on $l$. The case of $l=1$ follows from $\bs_1\in S\subseteq(\supp(G))_2$. Now assume that $(\sum_{i=1}^l\bs_i)_2\in(\supp(G))_2$. Suppose $(\sum_{i=1}^l\bs_i)_2=(p_s)_{s=1}^n$ and $\bs_{l+1}=(q_s)_{s=1}^n$. \newtssos{Let $J_1=\{s\mid p_s=1,q_s=0\}$, $J_2=\{s\mid p_s=q_s=1\}$ and $J_3=\{s\mid p_s=0,q_s=1\}$. If $|J_1|,|J_2|,|J_3|\le\hat{d}$, let $I=J_2$; if $|J_2|>\hat{d}$, let $I$ be any $\hat{d}$-subset of $J_2$; if $|J_1|>\hat{d}$, let $K$ be any $\hat{d}$-subset of $J_1$ and $I=J_1\backslash K$; if $|J_3|>\hat{d}$, let $K$ be any $\hat{d}$-subset of $J_3$ and $I=J_3\backslash K$. Then define $\bu=(u_s)\in\Z_2^n\cap\N^n_{\hat{d}}$ by
\begin{equation*}
u_s=
\begin{cases}
1,\quad&s\in I, \\
0,\quad&\textrm{otherwise},
\end{cases}
\end{equation*}
and let $\bv=(\sum_{i=1}^l\bs_i+\bu)_2$, $\bo=(\bs_{l+1}+\bu)_2$.
Then $(\sum_{i=1}^l\bs_i)_2=(\bu+\bv)_2$ and $(\bs_{l+1})_2=(\bu+\bo)_2$.
In the case of $|J_1|,|J_2|,|J_3|\le\hat{d}$, one has $\tau(\bv)=|J_1|\le\hat{d}$ and $\tau(\bo)=|J_3|\le\hat{d}$; in the case of $|J_2|>\hat{d}$, one has $\tau(\bv)=|J_1|+|J_2|-\hat{d}\le\hat{d}$ and $\tau(\bo)=|J_3|+|J_2|-\hat{d}\le\hat{d}$ because $(\sum_{i=1}^l\bs_i)_2,\bs_{l+1}\in\N^n_{2\hat{d}}$; in the case of $|J_1|>\hat{d}$, one has $\tau(\bv)=|J_1|+|J_2|-\hat{d}\le\hat{d}$ and $\tau(\bo)=|J_3|+|J_1|-\hat{d}\le\hat{d}$ because $(\sum_{i=1}^l\bs_i)_2,\bs=(\sum_{i=1}^{l+1}\bs_i)_2\in\N^n_{2\hat{d}}$; in the case of $|J_3|>\hat{d}$, one has $\tau(\bv)=|J_1|+|J_3|-\hat{d}\le\hat{d}$ and $\tau(\bo)=|J_3|+|J_2|-\hat{d}\le\hat{d}$ because $\bs=(\sum_{i=1}^{l+1}\bs_i)_2,\bs_{l+1}\in\N^n_{2\hat{d}}$. Consequently, $\bv,\bo\in\N^n_{\hat{d}}$. 
}
By the induction hypothesis, $(\bu+\bv)_2\in(\supp(G))_2$ which implies $\bu+\bv\in\supp(G)$ by Claim \uppercase\expandafter{\romannumeral1} and hence $\{\bu,\bv\}\in E$ (because $B_{0,\hat{d}}^{(*)}$ is stabilized under the support-extension operation). We also have $(\bu+\bo)_2\in S\subseteq(\supp(G))_2$ which implies $\bu+\bo\in\supp(G)$ by Claim \uppercase\expandafter{\romannumeral1} and hence $\{\bu,\bo\}\in E$. It follows that $\{\bv,\bo\}\in E$ and $\bv+\bo\in\supp(G)$. Thus $(\sum_{i=1}^{l+1}\bs_i)_2=(\bv+\bo)_2\in(\supp(G))_2$ which completes the induction and also completes the proof of Claim \uppercase\expandafter{\romannumeral2}.

By Lemma \ref{sec4-lm}, we have $\overline{S}=R^{\perp}$. Thus $\b,\g$ belong to the same connected component of $G$ if and only if $(\b+\g)_2\in\overline{S}$ by Claim \uppercase\expandafter{\romannumeral2} which is equivalent to $R^T(\b+\g)\equiv0$ $(\textrm{mod }2)$.
\end{proof}

\begin{remark}
Note that Theorem \ref{sec6-thm2} is applied for the standard monomial basis $\N^n_{\hat{d}}$. If a smaller monomial basis is chosen, then we only have the ``only if" part of the conclusion in Theorem \ref{sec6-thm2}. See Example \ref{sec6-ex1}.
\end{remark}

\newtssos{\begin{example}\label{sec6-ex1}
Let $f=1+x^2y^4+x^4y^2+x^4y^4-xy^2-3x^2y^2$ and $\A=\supp(f)$. The monomial basis given by the Newton polytope method is $\B=\{1,xy,xy^2,x^2y,x^2y^2\}$. The sign-symmetries of $\A$ consist of two elements: $(0,0)$ and $(1,0)$. According to the sign-symmetries, $\B$ is partitioned into $\{1,xy^2,x^2y^2\}$ and $\{xy,x^2y\}$. On the other hand, we have
\begin{equation*}
C_{\A}^{(1)}=\begin{bmatrix}
1&0&1&0&1\\
0&1&0&0&0\\
1&0&1&0&0\\
0&0&0&1&0\\
1&0&0&0&1
\end{bmatrix}
\quad\textrm{ and }\quad
B_{\A}^{(*)}=B_{\A}^{(1)}=\begin{bmatrix}
1&0&1&0&1\\
0&1&0&0&0\\
1&0&1&0&1\\
0&0&0&1&0\\
1&0&1&0&1
\end{bmatrix}.
\end{equation*}
Thus the partition of $\B$ induced by $B_{\A}^{(*)}$ is $\{1,xy^2,x^2y^2\}$, $\{xy\}$ and $\{x^2y\}$, which is a refinement of the partition determined by the sign-symmetries.
\end{example}

By virtue of Theorem \ref{sec6-thm2}, the partition of the monomial basis $\N^n_{\hat{d}-d_j}$ induced by $B_{j,\hat{d}}^{(*)}$, $j=1\ldots,m$, can also be characterized using sign-symmetries. 
\begin{corollary}\label{sec6-col}
Notations are as in Theorem \ref{sec6-thm2}. Let $B_{j,\hat{d}}^{(*)}$ be defined as in Section~\ref{sec5}. Then $\b,\g$ belong to the same block in the partition of $\N^n_{\hat{d}-d_j}$ induced by $B_{j,\hat{d}}^{(*)}$ if and only if $R^T(\b+\g)\equiv0$ $(\textrm{mod }2)$, $j=1\ldots,m$.
\end{corollary}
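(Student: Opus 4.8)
The plan is to reduce the claim to the case $j=0$ already treated in Theorem~\ref{sec6-thm2}, exploiting the fact that the proof of that theorem determines the union $\bigcup_{j=0}^m\S_{j,\hat d}^{(*)}$ exactly. Writing $S=(\A)_2$, the identity $\supp(G)=\bigcup_{k\ge0}\bigcup_{j=0}^m\S_{j,\hat d}^{(k)}$ recorded in the proof of Theorem~\ref{sec6-thm2}, combined with Claim~II there and with the monotonicity of $(\S_{j,\hat d}^{(k)})_k$ in $k$, gives
\begin{equation*}
\bigcup_{j=0}^m\S_{j,\hat d}^{(*)}=\{\a\in\N^n_{2\hat d}\mid(\a)_2\in\overline S\}.
\end{equation*}
I would take this as the only input borrowed from the previous proof.

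Fix now $j\in\{1,\ldots,m\}$. Since the sequences $(B_{j',\hat d}^{(k)})_k$, and hence $(\S_{j',\hat d}^{(k)})_k$ and $(C_{j',\hat d}^{(k)})_k$, have stabilized, the definition \eqref{sec6-eq7} at the stable stage reads, for $\b,\g\in\N^n_{\hat d-d_j}$,
\begin{equation*}
[C_{j,\hat d}^{(*)}]_{\b\g}=1 \iff \bigl(\supp(g_j)+\b+\g\bigr)\cap\bigcup_{j'=0}^m\S_{j',\hat d}^{(*)}\ne\emptyset .
\end{equation*}
Using the displayed union identity together with the facts that $\overline S$ is a subspace of $\Z_2^n$ and that $(\supp(g_j))_2\subseteq(\A)_2=S\subseteq\overline S$ (by \eqref{supp}), I would show this condition is equivalent to $(\b+\g)_2\in\overline S$. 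For the forward direction, a witness $\bu\in\supp(g_j)$ with $(\bu+\b+\g)_2\in\overline S$ yields $(\b+\g)_2=(\bu+\b+\g)_2+(\bu)_2\in\overline S$; for the reverse direction, any $\bu\in\supp(g_j)$ works, since $|\bu|\le\deg(g_j)\le 2d_j$ and $|\b|,|\g|\le\hat d-d_j$ force $\bu+\b+\g\in\N^n_{2\hat d}$, while $(\bu+\b+\g)_2=(\bu)_2+(\b+\g)_2\in\overline S$.

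It then remains to unwind the conclusion. The relation $\b\sim\g:\Leftrightarrow(\b+\g)_2\in\overline S$ is an equivalence relation on $\N^n_{\hat d-d_j}$ (reflexive since $(2\b)_2=\mathbf0\in\overline S$, transitive since $\overline S$ is closed under addition), so $C_{j,\hat d}^{(*)}$ is already the adjacency matrix of a disjoint union of cliques; hence $B_{j,\hat d}^{(*)}=\overline{C_{j,\hat d}^{(*)}}=C_{j,\hat d}^{(*)}$, and the blocks of the induced partition of $\N^n_{\hat d-d_j}$ are exactly the $\sim$-classes. Finally, by Lemma~\ref{sec4-lm} one has $\overline S=((\A)_2^{\perp})^{\perp}$, which equals $R^{\perp}$ because the columns of $R$ form a basis of the sign-symmetry space $(\A)_2^{\perp}$; therefore $\b,\g$ lie in the same block if and only if $(\b+\g)_2\in R^{\perp}$, i.e.\ $R^T(\b+\g)\equiv0$ $(\textrm{mod }2)$. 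The only step requiring some care is the degree bookkeeping in the middle paragraph linking $\supp(g_j)$ to the smaller monomial basis $\N^n_{\hat d-d_j}$; everything else is a direct consequence of the proof of Theorem~\ref{sec6-thm2} and of the subspace structure of $\overline S$.
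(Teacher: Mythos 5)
Your proof is correct and follows essentially the same route as the paper's: both arguments reduce the statement to Claim II in the proof of Theorem \ref{sec6-thm2}, use that $(\supp(g_j))_2\subseteq S\subseteq\overline{S}$ together with the subspace structure of $\overline{S}$ to pass between $(\b+\g)_2$ and $(\bu+\b+\g)_2$, and invoke Lemma \ref{sec4-lm} to identify $\overline{S}$ with $R^{\perp}$. Your version is somewhat more explicit than the paper's (you spell out the degree bookkeeping needed for the reverse implication and the fact that $C_{j,\hat{d}}^{(*)}$ is already a disjoint union of cliques, so $B_{j,\hat{d}}^{(*)}=C_{j,\hat{d}}^{(*)}$, where the paper argues directly with connected components), but the substance is the same.
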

\begin{proof}
Let $G_j(V_j,E_j)$ be the adjacency graph of $B_{j,\hat{d}}^{(*)}$ with vertices $V_j=\N^n_{\hat{d}-d_j}$ and edges $E_j=\{\{\b,\g\}\mid[B_{j,\hat{d}}^{(*)}]_{\b\g}=1\}$, $j=1,\ldots,m$. Then the partition of $\N^n_{\hat{d}-d_j}$ induced by $B_{j,\hat{d}}^{(*)}$ corresponds to the connected components of $G_j$. Note also that every connected component of $G_j$ is a complete subgraph.

If $\b,\g$ belong to the same connected component of $G_j$, then $\{\b,\g\}\in E_j$. So $\b+\g+\supp(g_j)\subseteq\supp(G)$ which implies $(\b+\g)_2\in(\supp(G))_2$ and hence $(\b+\g)_2\in\overline{S}$ by Claim \uppercase\expandafter{\romannumeral2} in the proof of Theorem \ref{sec6-thm2}. Thus $R^T(\b+\g)\equiv0$ $(\textrm{mod }2)$.

If $\b,\g$ don't belong to the same connected component of $G_j$, then $\{\b,\g\}\notin E_j$. So $\b+\g+\supp(g_j)\not\subseteq\supp(G)$ which implies  $(\b+\g)_2\notin\overline{S}$ by Claim \uppercase\expandafter{\romannumeral2} in the proof of Theorem \ref{sec6-thm2}. Thus $R^T(\b+\g)\not\equiv0$ $(\textrm{mod }2)$.
\end{proof}

Theorem \ref{sec6-thm2} together with Corollary \ref{sec6-col} implies that the block-structure of the TSSOS hierarchy at each relaxation order (if the standard monomial bases $\N^n_{\hat{d}-d_j},j=0,\ldots,m$ are used) converges to the block-structure determined by the sign-symmetries related to the support of the input data.

\begin{remark}
Though it is guaranteed that at the final iterative step of the TSSOS hierarchy, an equivalent SDP (with block-structure determined by sign-symmetries if the standard monomial bases are used) is retrieved, in practice it frequently happens that the same optimal value as the dense moment-SOS relaxation is achieved at an earlier step, even at the first step, but with a much cheaper computational cost as we can see in Section~\ref{sec:benchs}.
\end{remark}}


For a family of polynomials $\mathbf{g}=(g_1,\ldots,g_m)\subseteq\R[\x]$, the associated {\em quadratic module} $\mathcal{Q}(\mathbf{g})=\mathcal{Q}(g_1,\ldots,g_m)\subseteq\R[\x]$ is defined by
\begin{equation}
    \mathcal{Q}(\mathbf{g}):=\{s_0+\sum_{j=1}^ms_jg_j\mid s_j\textrm{ is an SOS}, \, j=0,\ldots,m\}.
\end{equation}
The quadratic module $\mathcal{Q}(\mathbf{g})$ associated with $\mathbf{K}$ in \eqref{sec5-eq2} is said to be {\em Archimedean} if there exists $N>0$ such that the quadratic polynomial $\x\mapsto N-\|\x\|^2$ belongs to $\mathcal{Q}(\mathbf{g})$.

As a corollary of Theorem \ref{sec6-thm2} and Corollary \ref{sec6-col}, we
obtain the following sparse representation theorem for positive polynomials over basic compact semialgebraic sets.

\begin{theorem}\label{sec6-thm3}
Let $f\in\R[\x]$ and $\mathbf{K}$ be as in \eqref{sec5-eq2}. Assume that the quadratic module $\mathcal{Q}(\mathbf{g})$ is Archimedean and that $f$ is positive on $\mathbf{K}$. Let $\A=\supp(f)\cup\bigcup_{j=1}^m\supp(g_j)$ and let us define the sign-symmetries of $\A$ with the columns of the binary matrix $R$. Then $f$ can be represented as
\begin{equation*}
f=s_0+\sum_{j=1}^ms_jg_j,
\end{equation*}
for some SOS polynomials $s_0,s_1,\ldots,s_m$ satisfying $R^T\a\equiv0$ $(\textrm{mod }2)$ for any $\a\in\supp(s_j),j=0,\ldots,m$.
\end{theorem}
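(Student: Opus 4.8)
The plan is to obtain a dense Putinar certificate first and then \emph{symmetrize} it over the group of sign changes attached to the sign-symmetries of $\A$; the block structure of the symmetrized certificate is then exactly the one described by Theorem~\ref{sec6-thm2} and Corollary~\ref{sec6-col}. Concretely, since $\mathcal{Q}(\mathbf{g})$ is Archimedean and $f$ is (strictly) positive on $\mathbf{K}$, Putinar's Positivstellensatz yields a representation $f=\sigma_0+\sum_{j=1}^m\sigma_jg_j$ with each $\sigma_j$ an SOS polynomial. A priori the supports of the $\sigma_j$ need not lie in $\{\a:R^T\a\equiv0\ (\textrm{mod }2)\}$, so all the work is in replacing this certificate by a sign-symmetry-respecting one without destroying the SOS property.

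For $\br\in\Z_2^n$ let $\phi_{\br}\colon\R[\x]\to\R[\x]$ be the ring homomorphism determined by $\phi_{\br}(x_i)=(-1)^{r_i}x_i$, so that $\phi_{\br}(\x^{\a})=(-1)^{\br^T\a}\x^{\a}$ and $\phi_{\br}\circ\phi_{\bs}=\phi_{(\br+\bs)_2}$; thus $\{\phi_{\br}:\br\in\Z_2^n\}$ is a group. Let $V:=(\A)_2^{\perp}$ be the (finite) subspace of sign-symmetries of $\A$, which is spanned by the columns of $R$, and set $\Gamma:=\{\phi_{\br}:\br\in V\}$. For $\br\in V$ one has $\br^T\a\equiv0\ (\textrm{mod }2)$ for every $\a\in\A$, and since $\supp(f)\subseteq\A$ and $\supp(g_j)\subseteq\A$ this gives $\phi_{\br}(f)=f$ and $\phi_{\br}(g_j)=g_j$. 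Applying $\phi_{\br}$ to the Putinar identity and using $\phi_{\br}(g_j)=g_j$ yields $f=\phi_{\br}(\sigma_0)+\sum_j\phi_{\br}(\sigma_j)g_j$, where each $\phi_{\br}(\sigma_j)$ is again SOS because a ring homomorphism sends squares to squares. Averaging over $V$ and setting $s_j:=|V|^{-1}\sum_{\br\in V}\phi_{\br}(\sigma_j)$ then produces $f=s_0+\sum_{j=1}^ms_jg_j$ with each $s_j$ SOS (a positive multiple of a finite sum of SOS).

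It remains to read off the support condition. The key observation is that each $s_j$ is $\Gamma$-invariant: for $\bs\in V$ one has $\phi_{\bs}(s_j)=|V|^{-1}\sum_{\br\in V}\phi_{(\bs+\br)_2}(\sigma_j)=s_j$, since $\br\mapsto(\bs+\br)_2$ permutes $V$. Comparing coefficients in the identity $\phi_{\bs}(s_j)=s_j$ forces $(-1)^{\bs^T\a}=1$, i.e.\ $\bs^T\a\equiv0\ (\textrm{mod }2)$, for every $\a\in\supp(s_j)$ and every $\bs\in V$; as the columns of $R$ span $V$ this is exactly $R^T\a\equiv0\ (\textrm{mod }2)$, which is the asserted conclusion. (Equivalently, this says $(\supp(s_j))_2\subseteq\overline{(\A)_2}$; by Lemma~\ref{sec4-lm}, Theorem~\ref{sec6-thm2} and Corollary~\ref{sec6-col} this is precisely the condition that, in the standard basis $\N^n_{\hat d-d_j}$ and for $\hat d$ large enough, the Gram matrix of $s_j$ is block-diagonal with blocks the sign-symmetry classes of $B^{(*)}_{j,\hat d}$ — so the symmetrized certificate is exactly of the form \eqref{sec6-eq5} produced by the stabilized TSSOS relaxation, which is how the statement appears as a corollary of those results.)

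The only genuine obstacle I anticipate is the symmetrization bookkeeping: one must use that $V$ is a \emph{linear subspace} (not merely a set of symmetries) to conclude $\Gamma$-invariance after averaging, and one must carefully verify the equivalence between $\Gamma$-invariance of a polynomial and its support being contained in $\{\a:R^T\a\equiv0\ (\textrm{mod }2)\}$. Everything else is routine: $s_j$ remains SOS since $\phi_{\br}$ preserves squares and sums/positive scalings of SOS are SOS, and it is $f$ itself (rather than $f-\varepsilon$) that is represented because $f$ is strictly positive on $\mathbf{K}$, so Putinar applies to $f$ directly.
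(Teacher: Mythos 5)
Your proof is correct, but it takes a genuinely different route from the paper's. The paper starts from the same Putinar certificate $f=t_0+\sum_j t_jg_j$, but then fixes Gram matrices $Q_j$ of the $t_j$ and replaces them by $B_{j,\hat d}^{(*)}\circ Q_j$, i.e.\ it projects each Gram matrix onto the stabilized TSSOS block pattern; showing that this projection preserves both the SOS property (block-diagonality) and the identity itself requires the combinatorial description of $\supp(G)$ from Claim~II in the proof of Theorem~\ref{sec6-thm2}, together with an argument that the discarded terms (those whose exponents mod $2$ lie outside $\overline{(\A)_2}$) cannot interfere with the coefficient matching. You instead symmetrize the certificate by averaging over the group $\{\phi_{\br}:\br\in(\A)_2^{\perp}\}$ of sign changes fixing $f$ and the $g_j$; the identity is preserved automatically, SOS-ness is preserved because ring homomorphisms send squares to squares, and the support condition falls out of $\Gamma$-invariance by comparing coefficients. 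Your argument is more elementary and entirely independent of the TSSOS machinery of Sections~\ref{sec3}--\ref{sec5} (it is a standard Reynolds-operator argument, valid for any group of sign changes fixing the data), and it avoids the delicate non-cancellation bookkeeping. What the paper's proof buys in exchange is that the certificate it constructs is realized by the block structure that the TSSOS algorithm actually computes at the stabilized step, which is the point of Section~\ref{sec6}: the theorem is presented as a corollary of the convergence of the TSSOS blocks to the sign-symmetry partition. Both proofs establish the stated existence result; your parenthetical identification of the support condition with $(\supp(s_j))_2\subseteq\overline{(\A)_2}$ via Lemma~\ref{sec4-lm} is also correct, though the further claim that this forces a block-diagonal Gram matrix is a statement about a suitable choice of Gram matrix rather than an automatic consequence --- this does not affect the validity of the main argument.
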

\begin{proof}
By Putinar's Positivstellensatz \cite{pu}, there exist SOS polynomials $t_0,t_1,\ldots,t_m$ such that
\begin{equation}\label{sec7-eq1}
f=t_0+\sum_{j=1}^mt_jg_j.
\end{equation}
Let $d_j=\lceil\deg(g_j)/2\rceil,j=0,\ldots,m$ and $\hat{d}=\max\{\lceil\deg(t_jg_j)/2\rceil:j=0,1,\ldots,m\}$ with $g_0=1$. Let $Q_j$ be a Gram matrix associated to $t_j$ and indexed by the monomial basis $\N^n_{\hat{d}-d_j},j=0,\ldots,m$. Then set $s_j=(\x^{\N^n_{\hat{d}-d_j}})^T(B_{j,\hat{d}}^{(*)}\circ Q_j)\x^{\N^n_{\hat{d}-d_j}}$ for $j=0,\ldots,m$, where $B_{j,\hat{d}}^{(*)}$ is defined as in Section~\ref{sec5}. For all $j=0,\ldots,m$, $B_{j,\hat{d}}^{(*)}\circ Q_j$ is block-diagonal up to permutation and $Q_j$ is positive semidefinite, thus $s_j$ is an SOS polynomial.

Following the notation from Theorem \ref{sec6-thm2} and Corollary \ref{sec6-col}, let $G$ be the adjacency graph of $B_{0,\hat{d}}^{(*)}$. By construction, $\supp(s_0)\subseteq\supp(G)$.
For $j=1,\ldots,m$, let $B_{j,\hat{d}}^{(k)},B_{j,\hat{d}}^{(*)}$ be defined as in Section\,\ref{sec5} and let $G_j^k,G_j$ be the adjacency graphs of $B_{j,\hat{d}}^{(k)},B_{j,\hat{d}}^{(*)}$, respectively. By construction, $\supp(G_j)=\bigcup_{k\ge1}\supp(G_j^k)$. By the proof of Claim \uppercase\expandafter{\romannumeral2} in Theorem \ref{sec6-thm2}, $\supp(G_j^k)\subseteq\supp(G)$ for all $k\ge1$.
It follows that $\supp(G_j)\subseteq\supp(G)$ for $j=1,\ldots,m$.
Therefore, we have $\supp(s_j)\subseteq\supp(G_j)\subseteq\supp(G)$ for $1\le j\le m$. Hence for any $j$ and any $\a\in\supp(s_j)$, one has $(\a)_2\in\overline{S}$ by Claim \uppercase\expandafter{\romannumeral2} in the proof of Theorem \ref{sec6-thm2}, which implies $R^T\a\equiv0$ $(\textrm{mod }2)$. Moreover, for any $\a'\in\supp(g_j)$, we have $(\a+\a')_2\in\overline{S}$ and for any $\a''\in\N^n_{2\hat{d}}\backslash\supp(G)$, we have $(\a'')_2\notin\overline{S}$ by Claim \uppercase\expandafter{\romannumeral2} in the proof of Theorem \ref{sec6-thm2} and hence $(\a''+\a')_2\notin\overline{S}$. From these facts we deduce that substituting $t_i$ by $s_i$ in \eqref{sec7-eq1} is just removing the terms whose exponents modulo $2$ are not in $\overline{S}$ from the right hand side of \eqref{sec7-eq1}. Doing so, one does not change the match of coefficients on both sides of the equality. Thus we have
\begin{equation*}
f=s_0+\sum_{j=1}^ms_jg_j,
\end{equation*}
with the desired property.
\end{proof}
\section{Numerical experiments}
\label{sec:benchs}
In this section, we present numerical results of the proposed primal-dual hierarchies \eqref{sec4-eq1}-\eqref{sec4-eq3} and \eqref{sec-eq1}-\eqref{sec6-eq1} of block SDP relaxations for both unconstrained and constrained polynomial optimization problems, respectively.
Our algorithm, named TSSOS, is implemented in Julia for constructing instances of the dual SDP problems \eqref{sec4-eq1}  and \eqref{sec6-eq1}, then relies on MOSEK \cite{mosek} to solve them. 
\newtssos{
TSSOS utilizes the Julia packages LightGraphs \cite{graph} to handle graphs and JuMP \cite{jump} to model SDP. 
}
In the following subsections, we compare the performance of TSSOS with that of GloptiPoly \cite{he} and Yalmip \cite{lo}.
As for TSSOS, GloptiPoly and Yalmip also rely on MOSEK to solve SDP problems. 

Our TSSOS tool can be downloaded at  \href{https://github.com/wangjie212/TSSOS.}{github:TSSOS}.
%
All numerical examples were computed on an Intel Core i5-8265U@1.60GHz CPU with 8GB RAM memory and the WINDOWS 10 system. \newtssos{The timing includes the time for pre-processing (to get the block-structure in TSSOS), the time for modeling SDP and the time for solving SDP. 
Although the modeling part in Julia is usually faster than the one in Matlab, typically the time for solving SDP is dominant on the tested examples in this paper and exceeds the pre-processing time and the modeling time by one order of magnitude.}

The notations that we use are listed in Table \ref{table1}.
\begin{table}[htbp]
\caption{The notations}\label{table1}
\begin{center}
\begin{tabular}{|c|c|}
\hline
$n$&the number of variables\\
\hline
$2d$&the degree\\
\hline
$s$&the number of terms\\
\hline
$\hat{d}$&the relaxation order of Lasserre hierarchy\\
\hline
$k$&the sparse order of the TSSOS hierarchy\\
\hline
bs&the size of monomial bases\\
\hline
\multirow{3}*{mb}&the maximal size of blocks (or a vector whose\\ 
&$k$-th entry is the maximal size of blocks obtained from the\\ 
&TSSOS hierarchy at sparse order $k$ in Table \ref{tb:randpoly1} and Table \ref{tb:randpoly2})\\
\hline
\multirow{3}*{opt}&the optimal value (or a vector\\ 
&whose $k$-th entry is the optimal value obtained from the\\ 
&TSSOS hierarchy at sparse order $k$ in Table \ref{tb:randpoly1} and Table \ref{tb:randpoly2})\\
\hline
\multirow{3}*{time}&running time in seconds\\ 
&(or a vector whose $k$-th entry is the time for computing the\\ 
&TSSOS hierarchy at sparse order $k$ in Table \ref{tb:randpoly1} and Table \ref{tb:randpoly2})\\
\hline
$0$&a number whose absolute value less than $\num{1e-5}$\\
\hline
\#block&the size of blocks\\
\hline
$i\times j$&$j$ blocks of size $i$\\
\hline
-&out of memory\\
\hline
\end{tabular}
\end{center}
\end{table}

\subsection{Unconstrained polynomial optimization problems}
For the unconstrained case, let us first look at an illustrative example.
\begin{example}
Let
\begin{align*}
    f=\,&4(\sum_{i=1}^4p_i^2)^4\sum_{i=1}^4p_i^2a_i^{10}-(\sum_{i=1}^4p_i^2)^3\sum_{i=1}^4p_i^2a_i^{8}\sum_{i=1}^4p_i^2a_i^{2}-(\sum_{i=1}^4p_i^2a_i^{2})^5\\
    &+2(\sum_{i=1}^4p_i^2)^2\sum_{i=1}^4p_i^2a_i^{6}(\sum_{i=1}^4p_i^2a_i^{2})^2-3(\sum_{i=1}^4p_i^2)^2(\sum_{i=1}^4p_i^2a_i^{4})^2\sum_{i=1}^4p_i^2a_i^{2}\\
    &+3\sum_{i=1}^4p_i^2\sum_{i=1}^4p_i^2a_i^{4}(\sum_{i=1}^4p_i^2a_i^{2})^3-4(\sum_{i=1}^4p_i^2)^3\sum_{i=1}^4p_i^2a_i^{6}\sum_{i=1}^4p_i^2a_i^{4}.
\end{align*}
The polynomial $f$ has $8$ variables and is of degree $20$. We compute a basis by the Newton polytope method \eqref{sec2-eq3} which has $1284$ monomials. The first step of the TSSOS hierarchy gives us a block-structure as follows:
\begin{center}
\begin{tabular}{|c|c|c|c|c|c|c|c|c|c|c|c|}
\hline
size&$1$&$2$&$3$&$4$&$10$&$11$&$14$&$19$&$20$&$31$&$42$\\
\hline
number&$1$&$6$&$36$&$18$&$5$&$6$&$4$&$1$&$18$&$12$&$4$\\
\hline
\end{tabular}
\end{center}
where the first line is the size of blocks and the second line is the number of blocks of the corresponding size. We obtain the optimal value $\num{-2.1617e-6}$ at the first step of the TSSOS hierarchy. The whole computation takes only $12$s! It turns out that the hierarchy converges at the first iteration for this polynomial.
\end{example}

\bigskip
\noindent{\bf{\large\textbullet \,\,Randomly generated examples}}
\bigskip

Now we present the numerical results for randomly generated polynomials of two types. The first type is of the SOS form. More concretely, we consider the polynomial $$f=\sum_{i=1}^tf_i^2\in\textbf{randpoly1}(n,2d,t,p) \,,$$ constructed as follows: first randomly choose a subset of monomials $M$ from $\x^{\N^n_{d}}$ with probability $p$, and then randomly assign the elements of $M$ to $f_1,\ldots,f_t$ with random coefficients between $-1$ and $1$. We generate $18$ random polynomials $F_1,\ldots,F_{18}$ from $6$ different classes\footnote{The polynomials can be downloaded at https://wangjie212.github.io/jiewang/code.html.}, where $$F_1,F_2,F_3\in\textbf{randpoly1}(8,8,30,0.1),$$ $$F_4,F_5,F_6\in\textbf{randpoly1}(8,10,25,0.04),$$
$$F_{7},F_{8},F_{9}\in\textbf{randpoly1}(9,10,30,0.03),$$
$$F_{10},F_{11},F_{12}\in\textbf{randpoly1}(10,12,20,0.01),$$
$$F_{13},F_{14},F_{15}\in\textbf{randpoly1}(10,16,30,0.003),$$
$$F_{16},F_{17},F_{18}\in\textbf{randpoly1}(12,12,50,0.01).$$
For these polynomials, the sign-symmetry is always trivial. We compute a monomial basis using the Newton polytope method \eqref{sec2-eq3}.
Table \ref{tb:randpoly1} displays the numerical results on these polynomials.
Note that the time for computing a monomial basis is included in the time of the first step of the TSSOS hierarchy.
In Table \ref{tb:randpoly1c}, we compare the performance of TSSOS ($k=1$), GloptiPoly and Yalmip on these polynomials.
In Yalmip, we turn the option ``sos.newton" on to compute a monomial basis also by the Newton polytope method.

For these examples, TSSOS always provides a nice block-structure at sparse order $k=1$ and retrieves the same optimum as the dense moment-SOS relaxation in much less time. TSSOS is also significantly faster than Yalmip. Due to the memory limit, GloptiPoly (resp. Yalmip) cannot handle polynomials with more than $8$ (resp. $10$) variables while TSSOS can solve problems involving up to $12$ variables.

\begin{table}[htbp]
\caption{The results for randomly generated polynomials of type \uppercase\expandafter{\romannumeral1}}\label{tb:randpoly1}
\begin{center}
\begin{tabular}{|c|c|c|c|c|c|c|c|}
\hline
&$n$&$2d$&$s$&bs&mb&opt&time\\
\hline
$F_1$&$8$&$8$&$64$&$106$&$[31,105,106]$&$[0,0,0]$&$[1.7,3.8,3.9]$\\
\hline
$F_2$&$8$&$8$&$102$&$122$&$[71,122]$&$[0,0]$&$[4.6,11]$\\
\hline
$F_3$&$8$&$8$&$104$&$150$&$[102,150]$&$[0,0]$&$[8.8,15]$\\
\hline
$F_4$&$8$&$10$&$103$&$202$&$[64,202]$&$[0,0]$&$[4.8,83]$\\
\hline
$F_5$&$8$&$10$&$85$&$201$&$[66,201]$&$[0,0]$&$[4.2,68]$\\
\hline
$F_6$&$8$&$10$&$111$&$128$&$[76,128]$&$[0,0]$&$[5.2,20]$\\
\hline
$F_{7}$&$9$&$10$&$101$&$145$&$[35,142,145]$&$[0,0,0]$&$[3.2,38,42]$\\
\hline
$F_{8}$&$9$&$10$&$166$&$178$&$[67,178]$&$[0,0]$&$[6.5,96]$\\
\hline
$F_{9}$&$9$&$10$&$161$&$171$&$[62,170,171]$&$[0,0,0]$&$[5.9,89,101]$\\
\hline
$F_{10}$&$10$&$12$&$271$&$223$&$[75,220,223]$&$[0,0,0]$&$[12,403,435]$\\
\hline
$F_{11}$&$10$&$12$&$253$&$176$&$[60,167,176]$&$[0,0,0]$&$[9.2,98,122]$\\
\hline
$F_{12}$&$10$&$12$&$261$&$204$&$[73,204]$&$[0,0]$&$[12,324]$\\
\hline
$F_{13}$&$10$&$16$&$370$&$1098$&$[99,1098]$&$[0,\textrm{-}]$&$[36,\textrm{-}]$\\
\hline
$F_{14}$&$10$&$16$&$412$&$800$&$[195,800]$&$[0,\textrm{-}]$&$[305,\textrm{-}]$\\
\hline
$F_{15}$&$10$&$16$&$436$&$618$&$[186,617,618]$&$[0,\textrm{-},\textrm{-}]$&$[207,\textrm{-},\textrm{-}]$\\
\hline
$F_{16}$&$12$&$12$&$488$&$330$&$[129,324,330]$&$[0,\textrm{-},\textrm{-}]$&$[61,\textrm{-},\textrm{-}]$\\
\hline
$F_{17}$&$12$&$12$&$351$&$264$&$[26,42,151,263,264]$&$[0,0,0,\textrm{-},\textrm{-}]$&$[17,0.45,76,\textrm{-},\textrm{-}]$\\
 \hline
$F_{18}$&$12$&$12$&$464$&$316$&$[45,274,316]$&$[0,\textrm{-},\textrm{-}]$&$[22,\textrm{-},\textrm{-}]$\\
\hline
\end{tabular}
\end{center}
\end{table}

\begin{table}[htbp]
\caption{Comparison with GloptiPoly and Yalmip for randomly generated polynomials of type \uppercase\expandafter{\romannumeral1}}\label{tb:randpoly1c}
\begin{center}
\begin{tabular}{|c|c|c|c||c|c|c|c|}
\hline
&\multicolumn{3}{c||}{time}&&\multicolumn{3}{c|}{time}\\
\hline
&TSSOS&GloptiPoly&Yalmip&&TSSOS&GloptiPoly&Yalmip\\
\hline
$F_1$&$1.7$&$306$&$4.9$&$F_{10}$&$12$&-&$474$\\
\hline
$F_2$&$4.6$&$348$&$13$&$F_{11}$&$9.2$&-&$147$\\
\hline
$F_3$&$8.8$&$326$&$19$&$F_{12}$&$12$&-&$350$\\
\hline
$F_4$&$4.8$&-&$92$&$F_{13}$&$36$&-&-\\
\hline
$F_5$&$4.2$&-&$72$&$F_{14}$&$305$&-&-\\
\hline
$F_6$&$5.2$&-&$22$&$F_{15}$&$207$&-&-\\
\hline
$F_7$&$3.2$&-&$44$&$F_{16}$&$61$&-&-\\
\hline
$F_8$&$6.5$&-&$143$&$F_{17}$&$17$&-&-\\
\hline
$F_9$&$5.9$&-&$109$&$F_{18}$&$22$&-&-\\
\hline
\end{tabular}
\end{center}
\end{table}

The second type of randomly generated problems are polynomials whose Newton polytopes are scaled standard simplices.
More concretely, we consider polynomials defined by $$f=c_0+\sum_{i=1}^nc_ix_i^{2d}+\sum_{j=1}^{s-n-1}c_j'\x^{\a_j}\in\textbf{randpoly2}(n,2d,s) \,,$$ constructed as follows: we randomly choose coefficients $c_i$ between $0$ and $1$, as well as $s-n-1$ vectors $\a_j$ in $\N^n_{2d-1}\backslash\{\mathbf{0}\}$ with random coefficients $c_j'$ between $-1$ and $1$. We generate $18$ random polynomials $G_1,\ldots,G_{18}$ from $6$ different classes\footnote{The polynomials can be downloaded at https://wangjie212.github.io/jiewang/code.html.}, where $$G_1,G_2,G_3\in\textbf{randpoly2}(8,8,15),$$ $$G_4,G_5,G_6\in\textbf{randpoly2}(9,8,20),$$
$$G_7,G_8,G_9\in\textbf{randpoly2}(9,10,15),$$
$$G_{10},G_{11},G_{12}\in\textbf{randpoly2}(10,8,20),$$
$$G_{13},G_{14},G_{15}\in\textbf{randpoly2}(11,8,20),$$
$$G_{16},G_{17},G_{18}\in\textbf{randpoly2}(12,8,25).$$
Table \ref{tb:randpoly2} displays the numerical results on these polynomials.
Table \ref{tb:randpoly2c} indicates similar efficiency and accuracy results on the comparison with GloptiPoly and Yalmip as for randomly generated polynomials of type I.
In Yalmip, we turn the option ``sos.congruence" on to take sign-symmetries into account, which allows one to handle slightly more polynomials than GloptiPoly.

\begin{table}[htbp]
\caption{The results for randomly generated polynomials of type \uppercase\expandafter{\romannumeral2}}\label{tb:randpoly2}
\begin{center}
\begin{tabular}{|c|c|c|c|c|c|c|c|}
\hline
&$n$&$2d$&$s$&bs&mb&opt&time\\
\hline
$G_1$&$8$&$8$&$15$&$495$&$[126,219]$&$[-0.5758,-0.5758]$&$[8.5,26]$\\
\hline
$G_2$&$8$&$8$&$15$&$495$&$[86,169]$&$[-34.6897,-34.6897]$&$[2.6,21]$\\
\hline
$G_3$&$8$&$8$&$15$&$495$&$[59,75]$&$[0.7073,0.7073]$&$[1.0,3.3]$\\
\hline
$G_4$&$9$&$8$&$20$&$715$&$[170,715]$&$[-801.6920,\textrm{-}]$&$[40,\textrm{-}]$\\
\hline
$G_5$&$9$&$8$&$20$&$715$&$[160,365]$&$[-0.8064,-0.8064]$&$[24,322]$\\
\hline
$G_6$&$9$&$8$&$20$&$715$&$[186,331]$&$[-1.6981,-1.6981]$&$[31,126]$\\
\hline
$G_{7}$&$9$&$10$&$15$&$2002$&$[122,224]$&$[-1.2945,-1.2945]$&$[24,303]$\\
\hline
$G_{8}$&$9$&$10$&$15$&$2002$&$[143,170]$&$[-0.6622,-0.6622]$&$[28,195]$\\
\hline
$G_{9}$&$9$&$10$&$15$&$2002$&$[154,208]$&$[0.5180,0.5180]$&$[21,180]$\\
\hline
$G_{10}$&$10$&$8$&$20$&$1001$&$[133,525]$&$[-0.4895,\textrm{-}]$&$[13,\textrm{-}]$\\
\hline
$G_{11}$&$10$&$8$&$20$&$1001$&$[223,403]$&$[0.1867,0.1867]$&$[86,481]$\\
\hline
$G_{12}$&$10$&$8$&$20$&$1001$&$[208,511]$&$[0.4943,\textrm{-}]$&$[66,\textrm{-}]$\\
\hline
$G_{13}$&$11$&$8$&$20$&$1365$&$[110,296]$&$[-3.9625,-3.9625]$&$[13,580]$\\
\hline
$G_{14}$&$11$&$8$&$20$&$1365$&$[128,436]$&$[-2.1835,\textrm{-}]$&$[37,\textrm{-}]$\\
\hline
$G_{15}$&$11$&$8$&$20$&$1365$&$[174,272]$&$[0.0588,0.0588]$&$[36,310]$\\
\hline
$G_{16}$&$12$&$8$&$25$&$1820$&$[263,924]$&$[-688.0269,\textrm{-}]$&$[693,\textrm{-}]$\\
\hline
$G_{17}$&$12$&$8$&$25$&$1820$&$[256,924]$&$[-40.2178,\textrm{-}]$&$[333,\textrm{-}]$\\
\hline
$G_{18}$&$12$&$8$&$25$&$1820$&$[275,924]$&$[-14.2693,\textrm{-}]$&$[393,\textrm{-}]$\\
\hline
\end{tabular}
\end{center}
\end{table}

\begin{table}[htbp]
\caption{Comparison with GloptiPoly and Yalmip for randomly generated polynomials of type \uppercase\expandafter{\romannumeral2}}\label{tb:randpoly2c}
\begin{center}
\begin{tabular}{|c|c|c|c|c|c|c|}
\hline
&\multicolumn{2}{c|}{TSSOS}&\multicolumn{2}{c|}{GloptiPoly}&\multicolumn{2}{c|}{Yalmip}\\
\cline{2-7}
&opt&time&opt&time (s)&opt&time\\
\hline
$G_1$&$-0.5758$&$8.5$&$-0.5758$&$346$&$-0.5758$&$31$\\
\hline
$G_2$&$-34.6897$&$2.6$&$-34.690$&$447$&$-34.6897$&$24$\\
\hline
$G_3$&$0.7073$&$1.0$&$0.7073$&$257$&$0.7073$&$6.0$\\
\hline
$G_4$&$-801.692$&$40$&-&-&-&-\\
\hline
$G_5$&$-0.8064$&$24$&-&-&$-0.8064$&$363$\\
\hline
$G_6$&$-1.6981$&$31$&-&-&$-1.6981$&$141$\\
\hline
$G_7$&$-1.2945$&$24$&-&-&$-1.2945$&$322$\\
\hline
$G_8$&$-0.6622$&$28$&-&-&$-0.6622$&$233$\\
\hline
$G_9$&$0.5180$&$21$&-&-&$0.5180$&$249$\\
\hline
$G_{10}$&$-0.4895$&$13$&-&-&-&-\\
\hline
$G_{11}$&$0.1867$&$86$&-&-&$0.1867$&$536$\\
\hline
$G_{12}$&$0.4943$&$66$&-&-&-&-\\
\hline
$G_{13}$&$-3.9625$&$13$&-&-&$-3.9625$&$655$\\
\hline
$G_{14}$&$-2.1835$&$37$&-&-&-&-\\
\hline
$G_{15}$&$0.0588$&$36$&-&-&$0.0588$&$340$\\
\hline
$G_{16}$&$-688.0269$&$693$&-&-&-&-\\
\hline
$G_{17}$&$-40.2178$&$333$&-&-&-&-\\
\hline
$G_{18}$&$-14.2693$&$393$&-&-&-&-\\
\hline
\end{tabular}
\end{center}
\end{table}

\bigskip
\noindent{\bf{\large\textbullet \,\,Examples from networked systems}}
\bigskip

Next we consider Lyapunov functions emerging from some networked systems. In \cite{han}, the authors propose a structured SOS decomposition for those systems, which allows them to handle structured Lyapunov function candidates up to $50$ variables.

The following polynomial is from Example 2 in \cite{han}:
\begin{equation*}
f=\sum_{i=1}^Na_i(x_i^2+x_i^4)-\sum_{i=1}^N\sum_{k=1}^Nb_{ik}x_i^2x_k^2,
\end{equation*}
where $a_i$ are randomly chosen from $[1,2]$ and $b_{ik}$ are randomly chosen from $[\frac{0.5}{N},\frac{1.5}{N}]$. Here, $N$ is the number of nodes in the network. The task is to determine whether $f$ is globally nonnegative.
Here we solve again SDP \eqref{sec4-eq1} at $k=1$ with TSSOS for $N=10,20,30,40,50,60,70,80$. The results are listed in Table \ref{tb:network1}.
\begin{table}[htbp]
\caption{The results for network problem \uppercase\expandafter{\romannumeral1}}\label{tb:network1}
\begin{center}
\begin{tabular}{|c|c|c|c|c|c|c|c|c|}
\hline
$N$&$10$&$20$&$30$&$40$&$50$&$60$&$70$&$80$\\
\hline
mb&$11$&$31$&$31$&$41$&$51$&$61$&$71$&$81$\\
\hline
time&$0.006$&$0.03$&$0.10$&$0.34$&$0.92$&$1.9$&$4.7$&$12$\\
\hline
\end{tabular}
\end{center}
\end{table}

For this example, the size of systems that can be handled in \cite{han} is up to $N=50$ nodes while our approach can easily handle systems with up to $N=80$ nodes.

The following polynomial is from Example 3 in \cite{han}:
\begin{equation}\label{sec8-eq1}
V=\sum_{i=1}^Na_i(\frac{1}{2}x_i^2-\frac{1}{4}x_i^4)+\frac{1}{2}\sum_{i=1}^N\sum_{k=1}^Nb_{ik}\frac{1}{4}(x_i-x_k)^4,
\end{equation}
where $a_i$ are randomly chosen from $[0.5,1.5]$ and $b_{ik}$ are randomly chosen from $[\frac{0.5}{N},\frac{1.5}{N}]$.
The task is to analyze the domain on which the Hamiltonian function $V$ for a network of Duffing oscillators is positive
definite. We use the following condition to establish an inner approximation of the domain on which $V$ is positive definite:
\begin{equation}\label{sec8-eq2}
f=V-\sum_{i=1}^N\lambda_ix_i^2(g-x_i^2)\ge0 \,,
\end{equation}
where $\lambda_i>0$ are scalar decision variables and $g$ is a fixed positive scalar.
Clearly, the condition \eqref{sec8-eq2} ensures that $V$ is positive definite when $x_i^2<g$.
Here we solve SDP \eqref{sec4-eq1} at $k=1$ with TSSOS for $N=10,20,30,40,50$.
For this example, graphs arising in the TSSOS hierarchy are naturally chordal, so we simply exploit chordal decompositions.
This example was also examined in \cite{maj} to demonstrate the advantage of SDSOS programming compared to dense SOS programming. The method based on SDSOS programming was executed in SPOT \cite{me} with MOSEK as a \newtssos{second-order cone programming solver}. The results are listed in Table \ref{tb:network2}. The row ``\#var" in Table \ref{tb:network2} indicates the number of decision variables.

\begin{table}[htbp]
\newtssos{
\caption{The results for network problem \uppercase\expandafter{\romannumeral2}}\label{tb:network2}
\begin{center}
\begin{tabular}{|c|c|c|c|c|c|c|}
\hline
\multicolumn{2}{|c|}{$N$}&$10$&$20$&$30$&$40$&$50$\\
\hline
\multirow{4}*{\#block}&\multirow{3}*{TSSOS}&$3\times45,$&$3\times190,$&$3\times435,$&$3\times780,$&$3\times1225,$\\
&&$1\times10,$&$1\times20,$&$1\times30,$&$1\times40,$&$1\times50,$\\
&&$11\times1$&$21\times1$&$31\times1$&$41\times1$&$51\times1$\\
\cline{2-7}
&SDSOS&$2\times2145$&$2\times26565$&$2\times122760$&$2\times370230$&$2\times878475$\\
\hline
\multirow{2}*{\#var}&TSSOS&$346$&$1391$&$3136$&$5581$&$8726$\\
\cline{2-7}
&SDSOS&$6435$&$79695$&$368280$&$1110690$&$2635425$\\
\hline
\multirow{2}*{time}&TSSOS&$0.01$&$0.06$&$0.17$&$0.50$&$0.89$\\
\cline{2-7}
&SDSOS&$0.47$&$1.14$&$5.47$&$20$&$70$\\
\hline
\end{tabular}
\end{center}
}
\end{table}

\newtssos{For this example, TSSOS uses much less decision variables than SDSOS programming, and hence spends less time compared to SDSOS programming.} On the other hand,
TSSOS computes a positive definite form $V$ after selecting a value for $g$ up to $2$ (which is the same as the maximal value obtained by the dense SOS) while the method in \cite{han} can select $g$ up to $1.8$ and the one based on SDSOS programming only works out for a maximal value of $g$ up to around $1.5$.

\bigskip
\noindent{\bf{\large\textbullet \,\,Broyden banded functions}}
\bigskip
The Broyden banded function (\cite{waki}) is defined by
\begin{equation*}
    f_{\textrm{Bb}}(\x)=\sum_{i=1}^n(x_i(2+5x_i^2)+1-\sum_{j\in J_i}(1+x_j)x_j)^2,
\end{equation*}
where $J_i=\{j\mid j\ne i, \max(1,i-5)\le j\le\min(n,i+1)\}$.
We prove that $f_{\textrm{Bb}}$ is nonnegative by solving SDP \eqref{sec4-eq1} at $k=1$ with TSSOS for $n=6,7,8,9,10$. We make a comparison between TSSOS and SparsePOP \cite{WakiKKMS08} which exploits correlative sparsity and uses \newtssos{SeDuMi \cite{sedumi} as an SDP solver}. For this example, since TSSOS and SparsePOP use different SDP solvers, the running time is not comparable directly. We thereby also provide the number of decision variables involved in TSSOS and SparsePOP respectively. The results are displayed in Table \ref{tb:bdfunction}. The row ``\#var" in Table \ref{tb:bdfunction} indicates the number of decision variables.

\begin{table}[htbp]
\caption{The results for Broyden banded functions}\label{tb:bdfunction}
\begin{center}
\begin{tabular}{|c|c|c|c|c|c|c|}
\hline
\multicolumn{2}{|c|}{$n$}&$6$&$7$&$8$&$9$&$10$\\
\hline
\multirow{3}*{\#block}&\multirow{2}*{TSSOS}&$64\times1,$&$85\times1,$&$108\times1,$&$133\times1,$&$160\times1,$\\
&&$1\times20$&$1\times35$&$1\times57$&$1\times87$&$1\times126$\\
\cline{2-7}
&SparsePOP&$84\times1$&$120\times1$&$120\times2$&$120\times3$&$120\times4$\\
\hline
\multirow{2}*{\#var}&TSSOS&$2100$&$3690$&$5943$&$8998$&$13006$\\
\cline{2-7}
&SparsePOP&$3570$&$7260$&$14520$&$21780$&$29040$\\
\hline
\multirow{2}*{time}&TSSOS&$0.27$&$0.76$&$1.9$&$5.3$&$13$\\
\cline{2-7}
&SparsePOP&$2.0$&$9.0$&$20$&$30$&$42$\\
\hline
\end{tabular}
\end{center}
\end{table}

\subsection{Constrained polynomial optimization problems}
For the constrained case, we also begin with an illustrative example.
\begin{example}
Consider the following problem:
\begin{equation*}
\begin{cases}
\min\quad &f=27-((x_1-x_2)^2+(y_1-y_2)^2)((x_1-x_3)^2+(y_1-y_3)^2)\\
&\quad\quad((x_2-x_3)^2+(y_2-y_3)^2)\\
\textrm{s.t.}\quad &g_1=((x_1^2+y_1^2)+(x_2^2+y_2^2)+(x_3^2+y_3^2))-3\\
&g_2=3-((x_1^2+y_1^2)+(x_2^2+y_2^2)+(x_3^2+y_3^2))
\end{cases}
\end{equation*}
We consider the TSSOS hierarchy with $\hat{d}=3$ and $\hat{d}=4$.
For $\hat d = 3$, and $k = 1$, we obtain the following block-structure:
\begin{center}
\begin{tabular}{|c|c|}
\hline
$M_3(\y)$&$31\times2,7\times1,1\times15$\\
\hline
$M_{2}(g_1\y)$&$13\times1,9\times1,1\times6$\\
\hline
$M_{2}(g_2\y)$&$13\times1,9\times1,1\times6$\\
\hline
\end{tabular}
\end{center}
and we obtain an optimal value $\num{-5.0324e-8}$.
For $\hat d = 3$, and $k = 2$, we have
\begin{center}
\begin{tabular}{|c|c|}
\hline
$M_3(\y)$&$31\times2,13\times1,9\times1$\\
\hline
$M_{2}(g_1\y)$&$13\times1,9\times1,3\times2$\\
\hline
$M_{2}(g_2\y)$&$13\times1,9\times1,3\times2$\\
\hline
\end{tabular}
\end{center}
and an optimal value $\num{-1.6016e-7}$.
For $\hat d = 3$, the hierarchy converges at $k = 2$.

For $\hat{d}=4$, the hierarchy immediately converges at $k = 1$, yielding the following block-structure:
\begin{center}
\begin{tabular}{|c|c|}
\hline
$M_4(\y)$&$79\times1,69\times1,31\times2$\\
\hline
$M_{3}(g_1\y)$&$31\times2,13\times1,9\times1$\\
\hline
$M_{3}(g_2\y)$&$31\times2,13\times1,9\times1$\\
\hline
\end{tabular}
\end{center}
and an optimal value $\num{-2.5791e-10}$.
\end{example}

Now we present the numerical results for constrained polynomial optimization problems. We generate six randomly generated polynomials $H_1,\ldots,H_6$ of type \uppercase\expandafter{\romannumeral2}\footnote{The polynomials can be downloaded at https://wangjie212.github.io/jiewang/code.html.} as objective functions $f$ and minimize $f$ over a basic semialgebraic set $\mathbf{K}\subseteq\R^{n}$ for two cases: the unit ball $$\mathbf{K}=\{(x_1,\ldots,x_n)\in\R^n\mid g_1=1-(x_1^2+\cdots+x_n^2)\ge0\} \,,$$ and the unit hypercube $$\mathbf{K}=\{(x_1,\ldots,x_n)\in\R^n\mid g_1=1-x_1^2\ge0,\ldots,g_n=1-x_n^2\ge0\}.$$

We compare the performance of TSSOS and GloptiPoly in these two cases.
The related numerical results are outputted in Table \ref{unitball} and Table \ref{unithypercube}.
As in the unconstrained case, Table \ref{unitball} and Table \ref{unithypercube} show that TSSOS performs much better than the dense moment-SOS without compromising accuracy.

\begin{table}[htbp]
\caption{The results for minimizing randomly generated polynomials of type \uppercase\expandafter{\romannumeral2} over unit balls}\label{unitball}
\begin{center}
\begin{tabular}{|c|c|c|c|c|c|c|c|c|c|}
\hline
&\multirow{2}*{$(n,2d,s)$}&\multirow{2}*{$\hat{d}$}&\multirow{2}*{$k$}&\multirow{2}*{mb}&\multicolumn{2}{c|}{TSSOS}&\multicolumn{2}{c|}{GloptiPoly}\\
\cline{6-9}
&&&&&opt&time&opt&time\\
\hline
\multirow{4}*{$H_1$}&$\multirow{4}*{(6,8,10)}$&\multirow{2}*{$4$}&$1$&$(59,25)$&$0.1362$&$0.67$&\multirow{2}*{$0.1362$}&\multirow{2}*{$8.0$}\\
\cline{4-7}
&&&$2$&$(59,25)$&$0.1362$&$0.39$&&\\
\cline{3-9}
&&\multirow{2}*{$5$}&$1$&$(113,59)$&$0.1362$&$3.0$&\multirow{2}*{$0.1362$}&\multirow{2}*{$80$}\\
\cline{4-7}
&&&$2$&$(113,59)$&$0.1362$&$3.1$&&\\
\hline
\multirow{4}*{$H_2$}&$\multirow{4}*{(7,8,12)}$&\multirow{2}*{$4$}&$1$&$(85,36)$&$0.1373$&$1.6$&\multirow{2}*{$0.1373$}&\multirow{2}*{$34$}\\
\cline{4-7}
&&&$2$&$(99,40)$&$0.1373$&$1.7$&&\\
\cline{3-9}
&&\multirow{2}*{$5$}&$1$&$(176,85)$&$0.1373$&$11$&\multirow{2}*{-}&\multirow{2}*{-}\\
\cline{4-7}
&&&$2$&$(212,99)$&$0.1373$&$21$&&\\
\hline
\multirow{4}*{$H_3$}&$\multirow{4}*{(8,8,15)}$&\multirow{2}*{$4$}&$1$&$(69,23)$&$0.1212$&$2.8$&\multirow{2}*{$0.1212$}&\multirow{2}*{$225$}\\
\cline{4-7}
&&&$2$&$(135,45)$&$0.1212$&$13$&&\\
\cline{3-9}
&&\multirow{2}*{$5$}&$1$&$(144,69)$&$0.1212$&$35$&\multirow{2}*{-}&\multirow{2}*{-}\\
\cline{4-7}
&&&$2$&$(333,135)$&0.1212&425&&\\
\hline
\multirow{4}*{$H_4$}&$\multirow{4}*{(9,6,15)}$&\multirow{2}*{$3$}&$1$&$(48,17)$&$0.8704$&$1.0$&\multirow{2}*{$0.8704$}&\multirow{2}*{$16$}\\
\cline{4-7}
&&&$2$&$(50,17)$&$0.8704$&$0.35$&&\\
\cline{3-9}
&&\multirow{2}*{$4$}&$1$&$(131,48)$&$0.8704$&$6.8$&\multirow{2}*{-}&\multirow{2}*{-}\\
\cline{4-7}
&&&$2$&$(140,50)$&$0.8704$&$9.7$&&\\
\hline
\multirow{4}*{$H_5$}&$\multirow{4}*{(10,6,20)}$&\multirow{2}*{$3$}&$1$&$(67,22)$&$0.5966$&$2.1$&\multirow{2}*{$0.5966$}&\multirow{2}*{$48$}\\
\cline{4-7}
&&&$2$&$(92,27)$&$0.5966$&$1.6$&&\\
\cline{3-9}
&&\multirow{2}*{$4$}&$1$&$(193,67)$&$0.5966$&$48$&\multirow{2}*{-}&\multirow{2}*{-}\\
\cline{4-7}
&&&$2$&$(274,92)$&$0.5966$&$77$&&\\
\hline
\multirow{4}*{$H_6$}&$\multirow{4}*{(11,6,20)}$&\multirow{2}*{$3$}&$1$&$(67,19)$&$0.1171$&$2.1$&\multirow{2}*{$0.1171$}&\multirow{2}*{$115$}\\
\cline{4-7}
&&&$2$&$(104,28)$&$0.1171$&$4.0$&&\\
\cline{3-9}
&&\multirow{2}*{$4$}&$1$&$(170,67)$&$0.1171$&$40$&\multirow{2}*{-}&\multirow{2}*{-}\\
\cline{4-7}
&&&$2$&$(356,104)$&0.1171&389&&\\
\hline
\end{tabular}\\
{\small In this table, the first entry of mb is the maximal size of blocks corresponding to the moment matrix $M_{\hat{d}}(\y)$ and the second entry of mb is the maximal size of blocks corresponding to the localizing matrix $M_{\hat{d}-d_1}(g_1\y)$.}
\end{center}
\end{table}

\begin{table}[htbp]
\caption{The results for minimizing randomly generated polynomials of type \uppercase\expandafter{\romannumeral2} over unit hypercubes}\label{unithypercube}
\begin{center}
\begin{tabular}{|c|c|c|c|c|c|c|c|c|c|}
\hline
&\multirow{2}*{$(n,2d,s)$}&\multirow{2}*{$\hat{d}$}&\multirow{2}*{$k$}&\multirow{2}*{mb}&\multicolumn{2}{c|}{TSSOS}&\multicolumn{2}{c|}{GloptiPoly}\\
\cline{6-9}
&&&&&opt&time&opt&time\\
\hline
\multirow{4}*{$H_1$}&$\multirow{4}*{(6,8,10)}$&\multirow{2}*{$4$}&$1$&$(59,25)$&$-0.4400$&$1.1$&\multirow{2}*{$-0.4400$}&\multirow{2}*{$19$}\\
\cline{4-7}
&&&$2$&$(59,25)$&$-0.4400$&$0.88$&&\\
\cline{3-9}
&&\multirow{2}*{$5$}&$1$&$(113,59)$&$-0.4400$&$8.0$&\multirow{2}*{$-0.4400$}&\multirow{2}*{$237$}\\
\cline{4-7}
&&&$2$&$(113,59)$&$-0.4400$&$9.1$&&\\
\hline
\multirow{4}*{$H_2$}&$\multirow{4}*{(7,8,12)}$&\multirow{2}*{$4$}&$1$&$(85,34)$&$-0.1289$&$3.0$&\multirow{2}*{$-0.1289$}&\multirow{2}*{$101$}\\
\cline{4-7}
&&&$2$&$(99,40)$&$-0.1289$&$4.1$&&\\
\cline{3-9}
&&\multirow{2}*{$5$}&$1$&$(176,85)$&$-0.1289$&$40$&\multirow{2}*{-}&\multirow{2}*{-}\\
\cline{4-7}
&&&$2$&$(212,99)$&$-0.1289$&$87$&&\\
\hline
\multirow{4}*{$H_3$}&$\multirow{4}*{(8,8,15)}$&\multirow{2}*{$4$}&$1$&$(69,23)$&$-0.1465$&$3.9$&\multirow{2}*{$-0.1465$}&\multirow{2}*{$433$}\\
\cline{4-7}
&&&$2$&$(135,45)$&$-0.1465$&$30$&&\\
\cline{3-9}
&&\multirow{2}*{$5$}&$1$&$(144,69)$&$-0.1465$&$77$&\multirow{2}*{-}&\multirow{2}*{-}\\
\cline{4-7}
&&&$2$&$(333,135)$&$-0.1465$&900&&\\
\hline
\multirow{4}*{$H_4$}&$\multirow{4}*{(9,6,15)}$&\multirow{2}*{$3$}&$1$&$(48,10)$&$0.1199$&$1.3$&\multirow{2}*{$0.1199$}&\multirow{2}*{$27$}\\
\cline{4-7}
&&&$2$&$(50,17)$&$0.1199$&$0.64$&&\\
\cline{3-9}
&&\multirow{2}*{$4$}&$1$&$(131,48)$&$0.1199$&$12$&\multirow{2}*{-}&\multirow{2}*{-}\\
\cline{4-7}
&&&$2$&$(140,50)$&$0.1199$&$26$&&\\
\hline
\multirow{5}*{$H_5$}&$\multirow{5}*{(10,6,20)}$&\multirow{3}*{$3$}&$1$&$(67,13)$&$-0.2813$&$2.1$&\multirow{3}*{$-0.2813$}&\multirow{3}*{$69$}\\
\cline{4-7}
&&&$2$&$(92,27)$&$-0.2813$&$2.7$&&\\
\cline{4-7}
&&&$3$&$(92,27)$&$-0.2813$&$2.7$&&\\
\cline{3-9}
&&\multirow{2}*{$4$}&$1$&$(193,67)$&$-0.2813$&$75$&\multirow{2}*{-}&\multirow{2}*{-}\\
\cline{4-7}
&&&$2$&$(274,92)$&$-0.2813$&$181$&&\\
\hline
\multirow{5}*{$H_6$}&$\multirow{5}*{(11,6,20)}$&\multirow{3}*{$3$}&$1$&$(67,15)$&$-0.2316$&$2.6$&\multirow{3}*{$-0.2316$}&\multirow{3}*{$211$}\\
\cline{4-7}
&&&$2$&$(104,28)$&$-0.2316$&$7.5$&&\\
\cline{4-7}
&&&$3$&$((104,28)$&$-0.2316$&$7.6$&&\\
\cline{3-9}
&&\multirow{2}*{$4$}&$1$&$(170,67)$&$-0.2316$&$103$&\multirow{2}*{-}&\multirow{2}*{-}\\
\cline{4-7}
&&&$2$&$(356,104)$&$-0.2316$&1108&&\\
\hline
\end{tabular}\\
{\small In this table, the first entry of mb is the maximal size of blocks corresponding to the moment matrix $M_{\hat{d}}(\y)$ and the second entry of mb is the maximal size of blocks corresponding to the localizing matrices $M_{\hat{d}-d_j}(g_j\y),j=1,\ldots,m$.}
\end{center}
\end{table}

\section{Conclusions}\label{cons}
We have provided a new variant of the moment-SOS hierarchy to handle polynomial optimization problems with term sparsity.
This hierarchy shares the same theoretical convergence guarantees with the standard one and
our numerical benchmarks demonstrate the performance speedup which can be achieved in both unconstrained and constrained cases.

One direction of further research is to investigate if one can benefit from the same term sparsity exploitation for other variants of the moment-SOS hierarchy, including the ones dedicated to optimal control, approximations of sets of interest (maximal invariant, reachable set) in dynamical systems, or the ones dedicated to eigenvalue and trace optimization of polynomials in non-commuting variables.

\section*{Acknowledgments}
The first and second author were supported from the Tremplin ERC Stg Grant ANR-18-ERC2-0004-01 (T-COPS project).
The second author was supported by the FMJH Program PGMO (EPICS project) and  EDF, Thales, Orange et Criteo.
The second and third author received funding from ANITI, coordinated by the Federal University of Toulouse within the framework of French Program ``Investing for the Future ¨C PIA3'' program under the Grant agreement n$^{\circ}$ANR-19-XXXX-000X.
This work has been supported by European Union’s Horizon 2020 research and innovation programme under the Marie Sklodowska-Curie Actions, grant agreement 813211 (POEMA). 
The research of the third author was funded by the European Research Council (ERC) under the European’s Union Horizon 2020 research and innovation program (grant agreement 666981 TAMING. 

\bibliographystyle{siamplain}

\end{document}